\documentclass[11pt,a4paper]{amsart}
\usepackage[T1]{fontenc}
\usepackage[utf8]{inputenc}
\usepackage{lmodern}
\usepackage[a4paper,textwidth=160mm,textheight=238mm,centering]{geometry}
\usepackage{amsmath,amssymb,amsthm,mathtools}
\usepackage{microtype}
\usepackage{xcolor}
\usepackage[colorlinks=true,linkcolor=blue!45!black,citecolor=blue!45!black,urlcolor=blue!45!black]{hyperref}
\hypersetup{pdftitle={Singularity of harmonic measure on Poisson--Voronoi tessellations in hyperbolic spaces},pdfauthor={Kohki Sakamoto}}

\newtheorem{theorem}{Theorem}[section]
\newtheorem{proposition}[theorem]{Proposition}
\newtheorem{lemma}[theorem]{Lemma}
\newtheorem{corollary}[theorem]{Corollary}
\newtheorem*{conjecture}{Conjecture}
\numberwithin{equation}{section}
\theoremstyle{definition}
\newtheorem{definition}[theorem]{Definition}

\theoremstyle{remark}
\newtheorem{remark}[theorem]{Remark}

\newcommand{\HH}{\mathbb H^d}
\newcommand{\PP}{\mathbb P}
\newcommand{\EE}{\mathbb E}
\newcommand{\RR}{\mathbb R}
\newcommand{\ZZ}{\mathbb Z}
\newcommand{\cE}{\mathcal E}
\newcommand{\cR}{\mathcal R}
\newcommand{\one}{\mathbf 1}
\newcommand{\vol}{\operatorname{vol}}
\newcommand{\Isom}{\operatorname{Isom}}
\newcommand{\dd}{\mathop{}\!\mathrm d}

\title[Singularity on hyperbolic Poisson--Voronoi tessellations]{Singularity of harmonic measure on Poisson--Voronoi tessellations in hyperbolic spaces}
\author{Kohki Sakamoto}
\address{Graduate School of Mathematical Sciences, University of Tokyo, 3-8-1 Komaba, Meguro, Tokyo 153-8914, Japan}
\email{sakamoto-kohki571@g.ecc.u-tokyo.ac.jp}
\date{}
\subjclass[2020]{60D05, 60J50, 31C20, 37A20}
\keywords{Poisson--Voronoi tessellation, harmonic measure, hyperbolic space}

\begin{document}
\begin{abstract}
We prove that the harmonic measure of simple random walk on the Poisson--Voronoi tessellation in $\mathbb H^d$ is almost surely singular with respect to Lebesgue measure on the boundary. This answers a question of Benjamini, Paquette, and Pfeffer in the hyperbolic plane and establishes the analogous result in higher dimensions. Our proof is based on ergodic theory of rerooting relations and their boundary extensions.
\end{abstract}
\maketitle

\section{Introduction}
Poisson–Voronoi tessellations provide natural isometry-invariant random discretizations of homogeneous spaces. Given a Poisson point process on such a space, its Voronoi tessellation partitions the space into cells according to the nearest point of the configuration, and its dual graph is called the Delaunay graph. These tessellations and graphs are central objects of stochastic geometry, in particular on Euclidean spaces. See \cite{Mol94} for background.

In this paper, we focus on simple random walk on Poisson--Delaunay graphs. Let $d\ge2$ and $\lambda>0$, and consider a Poisson point process on $\mathbb H^d$ with intensity $\lambda\vol$, where $\vol$ denotes hyperbolic volume. For a realization $\omega$ of the Poisson point process, write $\mathcal G_\omega$ for its Delaunay graph. Benjamini, Paquette, and Pfeffer \cite{BPP18} proved that simple random walk on $\mathcal G_\omega$ in the hyperbolic plane has positive speed, meaning that the distance from the origin grows linearly in time. Later, Paquette \cite{Paq18} and Benjamini--Krauz--Paquette \cite{BKP21} generalized this to higher dimensions. Combining this positive speed with Kaimanovich's criterion \cite{Kai00}, we know that the walk converges almost surely to a point of $\partial\mathbb H^d$, and hence we can consider the distribution of the limiting point on the boundary, called the \emph{harmonic measure} of the walk, denoted by $\nu_\omega$.

As we will see in Subsection~\ref{subsec:related-work}, a fundamental problem in the study of random walks in negatively curved spaces is to compare harmonic measure with natural geometric measures on the boundary. In our setting, the natural reference measure is the Lebesgue measure on $\partial\mathbb H^d\simeq\mathbb S^{d-1}$, and  Benjamini, Paquette, and Pfeffer \cite{BPP18} asked the following.

\begin{conjecture}[Conjecture~1.10, \cite{BPP18}]
For every $\lambda>0$, the harmonic measure of simple random walk on the Poisson--Delaunay graph in $\mathbb H^2$ with intensity $\lambda$ is almost surely singular with respect to Lebesgue measure on $\partial\mathbb H^2$.
\end{conjecture}

Our main result proves this conjecture and its higher-dimensional analogue.

\begin{theorem}\label{thm:main}
Let $d\ge2$ and $\lambda>0$. For almost every realization $\omega$ of the Poisson point process with intensity measure $\lambda\vol$ in $\mathbb H^d$, the harmonic measure $\nu_\omega$ of simple random walk on the associated Delaunay graph is singular with respect to Lebesgue measure on $\partial\mathbb H^d$.
\end{theorem}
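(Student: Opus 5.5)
The plan is to prove singularity through an ergodic dichotomy. By invariance and ergodicity, $\nu_\omega$ is either almost surely absolutely continuous with respect to Lebesgue measure or almost surely singular. We then rule out the absolutely continuous case by a dimension comparison between entropy and drift.

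\textbf{Step 1: the dichotomy.} View $(\mathcal G_\omega, o)$, with root at the Poisson point nearest a fixed origin (or by Palm conditioning, a point at $o$), as a unimodular random rooted graph embedded in $\mathbb H^d$. Consider the rerooting equivalence relation on Palm configurations, $\omega\sim\gamma^{-1}\omega$ for isometries $\gamma$ moving one Poisson point to another. Extend it to the boundary: pairs $(\omega,\xi)$ with $\xi\in\partial\mathbb H^d$, with the same relation acting on $\xi$ through the isometry. The Palm measure is invariant under rerooting and ergodic, since Poisson processes are mixing under $\Isom(\mathbb H^d)$. The family $\{\nu_\omega\}$ transforms equivariantly. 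Lebesgue measure on the sphere is quasi-invariant under isometries, with Radon--Nikodym derivative $e^{-(d-1)\beta_\xi}$, where $\beta_\xi$ is the Busemann cocycle. Hence the Lebesgue decomposition $\nu_\omega=\nu_\omega^{ac}+\nu_\omega^{s}$ is equivariant, and the set where $\nu^{ac}_\omega\neq0$ is rerooting-invariant. By stationarity of the environment seen from the walker, together with ergodicity, exactly one part survives almost surely. So it suffices to show that absolute continuity fails with positive probability.

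\textbf{Step 2: entropy/drift/dimension.} Let $\ell>0$ be the speed, which is known from \cite{BPP18,Paq18,BKP21}. Let $h$ be the asymptotic (Avez) entropy of the walk in random environment, defined via the environment seen from the particle. Following Kaimanovich and Le Prince, I would establish exact dimensionality: for $\nu_\omega$-a.e. $\xi$, the measure of a shadow of radius $e^{-t}$ decays like $e^{-(h/\ell)t}$. This is done by comparing harmonic measure of shadows along the geodesic ray to the entropy, using Shannon--McMillan--Breiman for the stationary environment process and positive speed. If $\nu_\omega\ll$ Lebesgue, then $\nu_\omega$ would have dimension $d-1$, forcing $h=(d-1)\ell$. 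So the goal is the strict inequality $h<(d-1)\ell$. Equivalently, we run a fundamental-inequality argument. Absolute continuity gives a density $\varphi_\omega=\dd\nu_\omega/\dd\mathrm{Leb}$, and equivariance gives
\[
\varphi_{\omega}(\xi)=\sum_y p(o,y)\,\varphi_{\omega_y}(\xi)e^{-(d-1)\beta_\xi(o,y)} .
\]
Applying Jensen to $\log$ along the walk, and combining with ergodicity, yields equality in Jensen almost surely. This rigidity says that $e^{-(d-1)\beta_\xi(X_n,X_{n+1})}\varphi_{\omega_{X_{n+1}}}$ divided by $\varphi_{\omega_{X_n}}$ is determined independently of the step taken. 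Equivalently, the harmonic function $y\mapsto \dd\nu^y_\omega/\dd\nu^o_\omega(\xi)$ coincides with $e^{-(d-1)\beta_\xi(o,y)}$ up to a multiplicative correction $\psi_\omega(y,\xi)$ that is a coboundary.

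\textbf{Step 3: contradiction (main obstacle).} I expect the hard step to be showing that this rigidity is impossible for a random Delaunay graph. The approach is to deduce, from the coboundary structure, that for a.e. $\omega$ the function $y\mapsto e^{-(d-1)\beta_\xi(o,y)}$ is itself $P_\omega$-harmonic on vertices up to a bounded positive factor. One then produces a local modification argument. The Poisson configuration has independent local perturbations, so a small finite-energy resampling of points inside a ball changes the transition probabilities while leaving the geometry, and hence the Busemann values at distant vertices, essentially fixed. The exact harmonicity identity relates $p(o,\cdot)$ to Busemann values of neighbors. Choose a configuration event of positive probability, for instance $o$ having two neighbors in symmetric positions relative to $\xi$, where moving one point perturbs $\beta_\xi$ at a neighbor but not the degree structure. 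This would violate the identity on a positive-measure set of pairs $(\omega,\xi)$. The delicate part is controlling the correction factor $\psi$ under such perturbations. I would try to show that $\psi$ is measurable with respect to the configuration outside arbitrarily large balls, by using the limit along the walk, and hence is unaffected by a local change. Then the identity for the unperturbed and perturbed configurations together gives a contradiction. This forces singularity in the dichotomy of Step 1.
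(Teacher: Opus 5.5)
The gap is in Steps~2 and~3. Step~1 is fine in spirit: the paper proves the same pure-type dichotomy, and your appeal to the environment seen from the walker is exactly how ergodicity of the boundary extension is obtained.

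\textbf{Step~2 yields no rigidity.} If $\nu_{\omega,o}\ll m$ with density $\varphi_\omega$, then automatically $\frac{\dd\nu_{\omega,y}}{\dd\nu_{\omega,o}}(\xi)=e^{-(d-1)\beta_\xi(y,o)}\,\psi(y,\xi)/\psi(o,\xi)$, where $\psi(y,\xi)=\varphi_{g_y^{-1}\omega}(g_y^{-1}\xi)$ and $g_yo=y$. Likewise $h=(d-1)\ell$ holds under the hypothesis, as your own dimension count shows. So any Jensen inequality you have in mind is an equality for free, and the coboundary you extract is a restatement of absolute continuity with no extra information. Ergodicity on the one-point bundle $(\omega,\xi)$ cannot improve this. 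Lebesgue measure is only quasi-invariant, and $\nu_{\omega,x}$ depends on the base vertex $x$, so $\dd\nu_{\omega,x}/\dd m_x$ is equivariant rather than rerooting-invariant. There is no invariant function for ergodicity to make constant.

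\textbf{Step~3 rests on a false claim.} It needs $\psi$ to be unaffected by local changes of the configuration. But $\psi(y,\cdot)$ is essentially the density of $\nu_{\omega,y}$, and a finite perturbation changes $\nu_{\omega,y}$ at every vertex; only its measure class is preserved. Your reduction to harmonicity of $y\mapsto e^{-(d-1)\beta_\xi(y,o)}$ ``up to a bounded positive factor'' is also unsupported. In a random environment nothing bounds $\psi$. Even if it were bounded, a bounded-factor relation could not be contradicted by a local perturbation that changes quantities by bounded amounts.

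\textbf{The missing idea} is an object that is \emph{exactly} invariant under finite insertions, not just up to measure class. The paper gets it by passing to pairs of boundary points.
\begin{enumerate}
\item The Na\"im current $J_\omega$ is the image on $\partial\HH\times\partial\HH$ of $\sum_x\deg_\omega(x)\PP_x^\omega$ taken modulo time shift. It depends only on the graph, with no base vertex, satisfies $J_{g\omega}=g_*J_\omega$, and is equivalent to $\nu_{\omega,o}^{\otimes2}$.
\item The Liouville current $L$ is genuinely $G$-invariant. Hence $\dd J_\omega/\dd L$ is $\widehat\cR$-invariant, and double ergodicity forces it to be a deterministic constant on its support: $J_\omega=c\,L|_{A_\omega}$.
\item Since finite perturbations preserve measure classes, insertion tolerance gives $J_{\omega'}=J_\omega$ for typical insertions $\omega'=\omega\cup Z$.
\item The contradiction then comes from Dirichlet energy, not from Busemann values. $J_\omega$ computes energies of harmonic extensions: via the Doob--Na\"im formula when $d=2$, and via trace forms on a finite set in general, since global energies may be infinite.
\item One inserts points near $o$ that strictly increase energy unless $H_\omega f(o)=H_\omega f(y)$ for all $y\sim_\omega o$. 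This forces $\nu_{\omega,x}$ to be independent of $x$, and averaging gives a $G$-invariant probability measure on $\partial\HH$, which is impossible.
\end{enumerate}
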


Note that, by rotational invariance, the averaged version of the hitting measures is just Lebesgue measure. Theorem~\ref{thm:main} shows that, for almost every realization, the harmonic measure is nevertheless concentrated on a set of Lebesgue measure zero.

The conjecture was motivated by the dimension-drop phenomenon discovered by Lyons, Pemantle, and Peres \cite{LPP95}. They proved that, for a nondegenerate supercritical Galton--Watson tree with finite mean, conditioned on survival, the harmonic measure of simple random walk is exact dimensional and has strictly smaller Hausdorff dimension than the boundary. Given that the harmonic measure of the simple random walk on a regular tree is equal to the Hausdorff measure on the boundary, their result illustrates how harmonic measure can differ from a natural geometric measure on the boundary, when the environment is randomized.

\subsection{Related work}\label{subsec:related-work}
First, we review further results on the comparison of harmonic and geometric measures in negatively curved spaces. For Brownian motion on universal covers of compact negatively curved manifolds, Ledrappier \cite{Led88} characterized equivalence of harmonic and Patterson--Sullivan measures by equality in the fundamental inequality. For compact negatively curved surfaces, he proved that this equivalence forces the curvature to be constant \cite{Led90}.

For finitely supported symmetric random walks on nonelementary hyperbolic groups, Blach\`ere, Ha\"issinsky, and Mathieu \cite{BHM11} established an analogous characterization: equality in the fundamental inequality is equivalent to equivalence of harmonic and Patterson--Sullivan measures. Gou\"ezel, Math\'eus, and Maucourant \cite{GMM18} proved that the inequality is strict for non-virtually-free hyperbolic groups equipped with word metrics.

The comparison of harmonic and geometric measures also arises for random walks on hyperbolic lattices, where the geometric reference is Lebesgue measure on $\partial\mathbb H^d$. Such random walks can be viewed as walks on periodic tessellations of $\mathbb H^d$, providing deterministic counterparts of the Poisson--Voronoi model considered here. Kaimanovich and Le Prince \cite{KLP11} conjectured that every finitely supported random walk generating a hyperbolic lattice has harmonic measure singular with respect to Lebesgue measure. The nonuniform case was studied by Guivarc'h and Le Jan \cite{GLJ93}; see also \cite{GMT15,RT21} and references therein. Recently, the conjecture was settled for cocompact Fuchsian groups, independently by B\'enard \cite{Ben26} and Kosenko and Tiozzo \cite{KT26}. Bogachev \cite{Bog26} subsequently proved the conjecture for cocompact Kleinian groups. It remains open in general in dimensions $d\ge4$. See also \cite{LTVL25} and \cite{KZ25} for results on discrete subgroups in higher rank.

Benjamini, Krauz, and Paquette \cite{BKP21} proved anchored expansion of Poisson--Delaunay graphs in every dimension, which implies positive hyperbolic speed for simple random walk on these graphs. Carrasco, Lessa, and Paquette \cite{CLP17} studied the speed and asymptotic entropy of the walk as $\lambda\to0$. As a consequence, they obtained dimension drop for sufficiently small $\lambda$ in every dimension, without giving an explicit threshold. The author \cite{Sak24} subsequently established exact dimensionality and the dimension formula for harmonic measures, as part of a more general result for invariant random graphs in Gromov hyperbolic spaces. Theorem~\ref{thm:main} establishes singularity at every intensity without using quantitative estimates on the speed or asymptotic entropy.

Hyperbolic Poisson--Voronoi tessellations have also been studied from more geometric perspectives. Benjamini and Schramm \cite{BS01} investigated Bernoulli percolation on Poisson--Voronoi tessellations in the hyperbolic plane and found behavior different from the Euclidean case. Motivated by their work, Hansen and M\"uller \cite{HM22,HM24} determined the high and low intensity asymptotics of the critical probability for Bernoulli percolation. Recently, D'Achille, Curien, Enriquez, Lyons, and \"Unel \cite{DCELU26} constructed the ideal Poisson--Voronoi tessellation for hyperbolic spaces, which arises as a nontrivial limit of Poisson--Voronoi tessellations when the intensity goes to zero. Similar objects are also constructed in higher rank symmetric spaces \cite{FMW23}, with applications to measured group theory.

\subsection{Outline of the proof}

Our proof builds on the ergodic theory of rerooting relations for invariant point processes developed by Ab\'ert and Mellick \cite{AM22}; see also \cite{FMW23}. The boundary extension of the rerooting relation also played a key role in our proof of the dimension formula for harmonic measure \cite{Sak24}. Here we consider its extension to pairs of boundary points and prove its ergodicity using the shift on the bundle of bilateral random walks. This double ergodicity yields a dichotomy: harmonic measure is almost surely either absolutely continuous with respect to Lebesgue measure or singular with respect to it; we rule out the former.

Another crucial input is the Na\"im current, a Radon measure on the double boundary. Its measure class agrees with that of the product of harmonic measure with itself, while the Doob--Na\"im formula relates it to Dirichlet energies of harmonic extensions \cite{Sil74}. Under absolute continuity, double ergodicity imposes rigidity on the Na\"im current through comparison with the Liouville current.

We show that this rigidity is incompatible with insertion tolerance of the Poisson process by comparing Dirichlet energies under suitable finite insertions. In dimension two, harmonic extensions of smooth boundary functions have finite energy, so the Doob--Na\"im formula applies directly. In higher dimensions, these energies may be infinite. Instead, we use trace forms on finite sets and their relation to the Na\"im current to isolate the energy change caused by a finite insertion.

\subsection{Organization of the paper}
In Section 2, we review some basic properties of Poisson point processes and random walks, and prepare the ergodic-theoretic framework. In Section 3, we construct the Naïm current and prove its rigidity under absolute continuity. In Section 4, we complete the proof of the main theorem.

\subsection*{Acknowledgements}
The author would like to thank Mathav Murugan for a discussion that led to the argument in Proposition~\ref{prop:plane-energy}, and Tianyi Zheng for helpful discussions and comments. This research is supported by JSPS KAKENHI Grant Number 25KJ0862 and FoPM, WINGS Program, the University of Tokyo.

\section{Preliminaries}\label{sec:preliminaries}
\subsection{Invariant point processes}
Throughout the paper, we fix $d\ge2$ and $\lambda>0$. Let $o$ be a base point of $X=\HH$, and write $d_{\mathbb{H}}$ and $\vol$ for hyperbolic distance and volume. We identify $B=\partial X$ with $\mathbb S^{d-1}$ in the Poincar\'e ball model centered at $o$, and write $m$ for normalized Lebesgue measure on this sphere. We write $G=\Isom^+(X)$ and $K=\operatorname{Stab}_G(o)$, so that $X=G/K$. On product spaces, we use the same notation $g$ for the diagonal action of $g\in G$. We first recall the basics of point processes on symmetric spaces, following \cite{AM22}.

\begin{definition}[Invariant point processes]
The configuration space is
\[
M(X)=\{\omega\subset X\colon\#(\omega\cap C)<\infty\text{ for every compact }C\subset X\}.
\]
We equip $M(X)$ with the Borel structure generated by the counting maps $\omega\mapsto\#(\omega\cap U)$ for bounded Borel sets $U\subset X$. This makes $M(X)$ a standard Borel space, on which $G$ acts by $g\omega=\{gx \colon x\in\omega\}$. A \emph{$G$-invariant point process} is a $G$-invariant Borel probability measure on $M(X)$.
\end{definition}

\begin{definition}[Poisson point process]
The \emph{Poisson point process} with intensity $\lambda\vol$ is characterized by the following two properties:
\begin{itemize}
\item For every Borel set $U\subset X$ of finite volume, the count $\#(\omega\cap U)$ has Poisson distribution with mean $\lambda\vol(U)$.
\item For every pair of disjoint Borel sets $U,V\subset X$, the counts $\#(\omega\cap U)$ and $\#(\omega\cap V)$ are independent.
\end{itemize}
We denote the process by $\Pi_\lambda$ and, since we fix the intensity, write $\mu$ for its law and $\EE_\mu$ for expectation under this law. Its law is $G$-invariant, and the action $G\curvearrowright(M(X),\mu)$ is ergodic by the independence property \cite{AM22}. This action can be seen as a continuous analogue of Bernoulli shifts.
\end{definition}

For $x\in\omega$, the \emph{Voronoi cell} of $x$ is
\[
C_x(\omega)=\bigcap_{y\in\omega}\{z\in X\colon d_{\mathbb{H}}(z,x)\le d_{\mathbb{H}}(z,y)\}.
\]
The \emph{Delaunay graph} $\mathcal G_\omega=(\omega,E_\omega)$ is the dual graph of the Voronoi tessellation, i.e.,
\begin{itemize}
\item Its vertex set is given by the configuration $\omega$;
\item There is an edge between distinct points $x,y\in\omega$ when their Voronoi cells share a facet.
\end{itemize}
For Poisson configurations, this is almost surely equivalent to requiring $C_x(\omega)\cap C_y(\omega)\ne\varnothing$. We write $x\sim_\omega y$ when $\{x,y\}\in E_\omega$, and $\deg_\omega(x)$ for the degree of $x$. Since the construction is $G$-equivariant, we have $\mathcal G_{g\omega}=g\mathcal G_\omega$.

\subsection{Palm measures and mass transport}\label{subsec:palm}
Although the event $o\in\omega$ has probability zero, the Palm measure gives a meaning to conditioning the point process to contain the base point $o$. Let
\[
M_o(X)=\{\omega\in M(X)\colon o\in\omega\},\,\, \Omega_o=K\backslash M_o(X).
\]
Since $K$ is compact, $\Omega_o$ is standard Borel. We do not distinguish between an element of a quotient space and its representative when no confusion can arise.

\begin{definition}[Palm measure]\label{def:palm}
Let $U\subset X$ be Borel with $0<\vol(U)<\infty$. The \emph{Palm measure} is the probability measure on $\Omega_o$ defined by
\[
\mu_o(A)=\frac{\EE_\mu\#\{gK\in\omega\cap U\colon g^{-1}\omega\in A\}}{\lambda\vol(U)}
\]
for each Borel subset $A \subset \Omega_o$.
\end{definition}

Note that the image of $g^{-1}\omega$ in $\Omega_o$ depends only on $gK$. For fixed $A$, the numerator, viewed as a function of $U$, defines a $G$-invariant locally finite measure on $X$. Since it is a constant multiple of hyperbolic volume, the definition is independent of $U$. We use the same notation $\mu_o$ for this measure and its $K$-invariant lift to $M_o(X)$. The latter is characterized by
\[
\int_{M_o(X)}F(\omega)\,\dd\mu_o(\omega)=\int_{\Omega_o}\int_K F(k\omega)\,\dd k\,\dd\mu_o(\omega)
\]
for every nonnegative Borel function $F$ on $M_o(X)$. Here $\dd k$ is Haar probability measure on $K$, and $\omega$ denotes any representative.

\begin{remark}\label{rem:slivnyak}
For the Poisson process, the Palm measure has the concrete description
\[
\mu_o=\operatorname{Law}(\Pi_\lambda\cup\{o\}).
\]
Indeed, for every nonnegative Borel $F$ on $M_o(X)$, the Mecke identity and invariance of the Poisson law give
\[
\EE_\mu\sum_{gK\in\omega\cap U}\int_K F(kg^{-1}\omega)\,\dd k=\lambda\vol(U)\EE_\mu F(\Pi_\lambda\cup\{o\}).
\]
Comparing with Definition~\ref{def:palm} identifies $\operatorname{Law}(\Pi_\lambda\cup\{o\})$ with $\mu_o$ as above \cite[Chapter~9]{LP18}.
\end{remark}

For every Borel $K$-invariant function $F\colon M_o(X)\to[0,\infty]$, Campbell's identity gives
\[
\EE_\mu\sum_{gK\in\omega\cap U}F(g^{-1}\omega)=\lambda\vol(U)\int_{M_o(X)}F(\omega)\,\dd\mu_o(\omega).
\]
In particular, a $G$-invariant Borel set is $\mu$-conull if and only if it is $\mu_o$-conull.

We use the following mass-transport principle; see \cite[Theorem~3.13 and Section~7]{AM22} and \cite[Corollary~6.2]{GL11}.
\begin{theorem}[Mass-transport principle]\label{thm:mass-transport}
Every diagonally $G$-invariant $\sigma$-finite Borel measure $M$ on $X\times X$ satisfies
\[
\int_{X\times X}\varphi(x)\,\dd M(x,y)=\int_{X\times X}\varphi(y)\,\dd M(x,y)
\]
for every nonnegative Borel function $\varphi$ on $X$. In particular, for a nonnegative Borel function $T(x,y;\omega)$, defined for $x,y\in\omega$ and satisfying $T(gx,gy;g\omega)=T(x,y;\omega)$, we have
\[
\EE_{\mu_o}\sum_{x\in\omega}T(o,x;\omega)=\EE_{\mu_o}\sum_{x\in\omega}T(x,o;\omega).
\]
\end{theorem}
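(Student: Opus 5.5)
The plan is to derive the statement from the unimodularity of $G$ together with the standard correspondence between invariant transports and diagonally invariant measures. For the first claim, I would disintegrate $M$ along the first coordinate. Since $M$ is diagonally $G$-invariant and $\sigma$-finite, its pushforward to the first factor, restricted to sets where it is locally finite, is a $G$-invariant measure on $X=G/K$. In the $\sigma$-finite setting I would first reduce to $M$ of the form $\psi(x,y)\,\dd M$ with $\psi$ invariant and integrable on compact sets, so the marginal is a constant multiple $c\,\vol$. Writing $\dd M(x,y)=\dd\vol(x)\,\dd\kappa_x(y)$, equivariance gives $\kappa_{gx}=g_*\kappa_x$. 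Hence $M$ is determined by the $K$-invariant measure $\kappa_o$, and
\[
M=\int_G (g,g)_*(\delta_o\otimes\kappa_o)\,\dd g
\]
up to normalization, where $\dd g$ is Haar measure on $G$ normalized to project to $\vol$.

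The key computation is then the following. We have $\int\varphi(x)\,\dd M=\int_G\varphi(go)\,\dd g\cdot\kappa_o(X)$. On the other hand, $\int\varphi(y)\,\dd M=\int_X\int_G\varphi(gy)\,\dd g\,\dd\kappa_o(y)$. For each $y=ho$, substituting $g\mapsto gh^{-1}$ uses right invariance of Haar measure, which is exactly unimodularity of $G=\Isom^+(\HH)$, a semisimple Lie group. This gives $\int_G\varphi(gho)\,\dd g=\int_G\varphi(go)\,\dd g$, and the two sides agree. Handling the $\sigma$-finite case (possibly infinite $\kappa_o$ mass) by monotone convergence over an exhausting invariant family is the only technical point. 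Alternatively one can cite \cite{AM22,GL11} directly, as the paper does.

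For the ``in particular'' part, I would define $M(A\times C)=\EE_\mu\sum_{x\in\omega\cap A,\,y\in\omega\cap C}T(x,y;\omega)$. This is diagonally invariant by $G$-invariance of $\mu$ and equivariance of $T$, and it is $\sigma$-finite after truncating $T$ by $T\wedge n$ and restricting to bounded regions. I would apply the first part with $\varphi=\one_U$. Campbell's identity converts the left side to $\lambda\vol(U)\,\EE_{\mu_o}\sum_{x}T(o,x;\omega)$, using $K$-invariance of $\mu_o$ to average over representatives. The right side becomes $\lambda\vol(U)\,\EE_{\mu_o}\sum_xT(x,o;\omega)$. Dividing by $\lambda\vol(U)$ gives the claim.

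The main obstacle is the $\sigma$-finiteness bookkeeping, since the marginals need not be locally finite. I would handle it by truncation and monotone convergence.
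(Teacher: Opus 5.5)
The paper gives no argument of its own for this statement; it cites \cite[Theorem~3.13 and Section~7]{AM22} and \cite[Corollary~6.2]{GL11}. Your self-contained route is therefore different, and most of it is sound.

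\begin{itemize}
\item The unimodularity computation for $M=\int_G(g,g)_*(\delta_o\otimes\kappa_o)\,\dd g$ is correct. Tonelli handles $\sigma$-finite $\kappa_o$ directly.
\item Your derivation of the Palm identity is also correct. Truncating to $T\wedge n$ yields a locally finite invariant measure, since the Poisson process has finite second moments. The map $\omega\mapsto\sum_{x\in\omega}T(o,x;\omega)$ is $K$-invariant, so Campbell's identity applies to both sides. Monotone convergence in $n$ then finishes.
\end{itemize}

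The step that is not justified is the reduction to a measure with locally finite marginal. For an abstract $M$ there is no $T$ to truncate. Restricting the marginal does not help either: an invariant marginal is $c\vol$ with $c\in[0,\infty]$ (see the identity below), so it is locally finite everywhere or nowhere.

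What you need is a diagonally invariant $\psi>0$ with $\int_{U\times X}\psi\,\dd M<\infty$ for bounded $U$. Its existence is not automatic. The obvious invariant exhaustion by $\{d_{\mathbb H}(x,y)\le n\}$ fails for the $\sigma$-finite measure $M=\sum_{q\in\mathbb Q_{>0}}m_q$, where $m_q$ is the invariant measure on $\{d_{\mathbb H}(x,y)=q\}$ with first marginal $\vol$.

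You can produce $\psi$ by averaging. Take $h>0$ with $\int h\,\dd M<\infty$ and put $\psi(x,y)=\int_Gh(gx,gy)\,\dd g$, redefined on the invariant $M$-null set $\{\psi=\infty\}$. Invariance of $M$, Tonelli, and unimodularity give $\int_{U\times X}\psi\,\dd M=\vol(U)\int h\,\dd M$. You would also need a Fubini-good base point to get a genuinely $K$-invariant $\kappa_o$, because a disintegration kernel is only a.e.\ equivariant.

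Once averaging is available, however, the disintegration is superfluous. Normalize Haar measure to project to $\vol$ and insert $1=\vol(U)^{-1}\int_G\one_U(g^{-1}y)\,\dd g$. Tonelli, invariance of $M$, and right invariance of Haar measure then give
\[
\int\varphi(x)\,\dd M(x,y)=\frac{M(X\times U)}{\vol(U)}\int_X\varphi\,\dd\vol=\int\varphi(y)\,\dd M(x,y)
\]
for any bounded $U$ with $\vol(U)>0$, with the convention $0\cdot\infty=0$. This handles every $\sigma$-finite $M$ at once and isolates unimodularity of $G$ as the only input. Your disintegration gives more structure than the statement needs, while the cited results cover more general settings.
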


By standard properties of Poisson--Voronoi tessellations \cite{BKP21} and the multivariate Mecke identity, there is a $G$-invariant $\mu$-conull Borel set on which all Voronoi cells are compact, $\mathcal G_\omega$ is connected and locally finite, and the stabilizer of $\omega$ in $G$ is trivial. By Campbell's identity, the same properties hold for $\mu_o$-almost every $\omega$. We restrict to this conull set and its intersection with $M_o(X)$ throughout the paper.

\subsection{Rerooting relation}
Following \cite[Section~7]{AM22}, we define the \emph{rerooting relation} by
\[
\cR=\{(\omega,g^{-1}\omega)\in\Omega_o^2\colon gK\in\omega\},
\]
which is a countable Borel equivalence relation on $\Omega_o$. Since the configurations have trivial stabilizers, the map $gK\mapsto g^{-1}\omega$ identifies the vertices of $\mathcal G_\omega$ with the $\cR$-class of $\omega$.

Recall that a partial isomorphism of a countable Borel equivalence relation on a standard Borel space is a Borel bijection between Borel subsets whose graph is contained in the relation. The relation is called \emph{measure-preserving} if every such map preserves the restrictions of the measure to its domain and range. It is called \emph{nonsingular} if these maps preserve null sets, and \emph{ergodic} if every invariant Borel set is null or conull.

\begin{proposition}\label{prop:rerooting}
The relation $\cR$ is measure-preserving and ergodic with respect to $\mu_o$.
\end{proposition}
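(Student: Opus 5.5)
The plan is to derive measure preservation from the mass-transport principle, Theorem~\ref{thm:mass-transport}, and ergodicity from ergodicity of $G\curvearrowright(M(X),\mu)$ combined with Campbell's identity.

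\emph{Measure preservation.} It suffices to show that for every nonnegative Borel function $f$ on $\cR$ we have $\int\sum_{\omega'\in[\omega]}f(\omega,\omega')\,\dd\mu_o(\omega)=\int\sum_{\omega'\in[\omega]}f(\omega',\omega)\,\dd\mu_o(\omega)$. This left/right counting-measure identity is equivalent to every partial isomorphism preserving $\mu_o$: apply it to $f=\one_{\mathrm{graph}(\phi)}\cdot(h\circ\pi_1)$ to get $\int_{\mathrm{dom}\,\phi}h\,\dd\mu_o=\int_{\mathrm{ran}\,\phi}h\circ\phi^{-1}\,\dd\mu_o$. To prove the identity, set $T(x,y;\omega)=f(g_x^{-1}\omega,g_y^{-1}\omega)$ for $x,y\in\omega$, where $g_x$ is any element with $g_xK=x$. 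The classes in $\Omega_o=K\backslash M_o(X)$ do not depend on the choice of $g_x$, so $T$ is well defined and diagonally invariant: $T(gx,gy;g\omega)=T(x,y;\omega)$. Trivial stabilizers ensure that $x\mapsto g_x^{-1}\omega$ is a bijection from $\omega$ onto the $\cR$-class of $\omega$. Hence for $\omega\in M_o(X)$ the sum $\sum_{x\in\omega}T(o,x;\omega)$ is exactly $\sum_{\omega'\in[\omega]}f(\omega,\omega')$, and similarly with the arguments reversed. The identity is then the second statement of Theorem~\ref{thm:mass-transport}, after integrating the $K$-invariant quantities against the lift of $\mu_o$.

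\emph{Ergodicity.} Let $A\subset\Omega_o$ be an $\cR$-invariant Borel set, and lift it to a $K$-invariant set $\tilde A\subset M_o(X)$. Define
\[
\hat A=\{\omega\in M(X)\colon g^{-1}\omega\in\tilde A\text{ for some }gK\in\omega\}.
\]
Invariance of $A$ under $\cR$ means this condition holds for some $x\in\omega$ if and only if it holds for all $x\in\omega$. The set $\hat A$ is $G$-invariant, since $(hg)^{-1}h\omega=g^{-1}\omega$. It is Borel because it is the projection of a Borel set with countable sections, or alternatively because $\hat A=\{\omega\colon\#\{x\in\omega\cap U\colon g_x^{-1}\omega\in\tilde A\}>0\text{ for some bounded }U\}$, a countable union of Borel conditions. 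By ergodicity of $G\curvearrowright(M(X),\mu)$, $\hat A$ is $\mu$-null or $\mu$-conull.

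Apply Campbell's identity with $F=\one_{\tilde A}$. The left side is $\EE_\mu\#\{x\in\omega\cap U\colon g_x^{-1}\omega\in\tilde A\}$, which by the all-or-nothing property equals $\EE_\mu[\#(\omega\cap U)\one_{\hat A}]$. So $\mu(\hat A)=0$ gives $\mu_o(A)=0$. If instead $\mu(\hat A)=1$, the left side becomes $\lambda\vol(U)$, so $\mu_o(A)=1$. Equivalently, we can use the stated fact that a $G$-invariant set is $\mu$-conull if and only if its trace on $M_o(X)$ is $\mu_o$-conull, noting that $\hat A\cap M_o(X)=\tilde A$ since $o\in\omega$ for $\omega\in M_o(X)$.

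\emph{Main obstacle.} The step needing the most care is the bookkeeping between $M_o(X)$ and its $K$-quotient in the mass-transport step: checking that $T$ is Borel, that it is independent of the choices of $g_x$, and that trivial stabilizers make the correspondence between $\omega$ and $[\omega]$ bijective, so that the sums do not overcount. Borel measurability can be handled with a Borel section $X\to G$ of $g\mapsto gK$, which exists because $G\to G/K$ is a continuous surjection with compact fibers.
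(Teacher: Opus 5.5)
Your proposal is correct and follows essentially the same route as the paper. Measure preservation comes from the mass-transport principle: you spell out the left/right counting-measure identity, where the paper just cites Ab\'ert--Mellick. Ergodicity comes from passing to the $G$-invariant saturation $\hat A\subset M(X)$, applying ergodicity of $\mu$, and using the Palm/Campbell identity $\lambda\vol(U)\mu_o(A)=\EE_\mu[\one_{\hat A}(\omega)\#(\omega\cap U)]$.
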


\begin{proof}
Measure preservation follows from the mass-transport principle, see \cite[Section~7]{AM22} for details.

Let $A\subset\Omega_o$ be $\cR$-invariant with $\mu_o(A)>0$, and let $\widetilde A\subset M(X)$ be the Borel set of configurations admitting a rerooting in $A$. Since invariance of $A$ implies that every rerooting of such a configuration belongs to $A$, the set $\widetilde A$ is $G$-invariant. For a bounded Borel $U\subset X$ of positive volume, the definition of the Palm measure gives
\[
\lambda\vol(U)\mu_o(A)=\EE_\mu\bigl[\one_{\widetilde A}(\omega)\#(\omega\cap U)\bigr].
\]
Thus $\mu(\widetilde A)>0$, and ergodicity gives $\mu(\widetilde A)=1$. The same identity now yields $\mu_o(A)=1$.
\end{proof}

For a Borel set $N\subset\Omega_o$, let $[N]_{\cR}$ denote its saturation under $\cR$, which is Borel and is null when $N$ is null.

\subsection{Random walks and harmonic measures}\label{subsec:bilateral}
Although Theorem~\ref{thm:main} concerns the usual simple random walk, it is useful to work with a bilateral version in the proof.

For a configuration $\omega$ and $x\in\omega$, let $\PP_x^\omega$ be the law on $X^\ZZ$ of a bilateral simple random walk: the future $(X_n)_{n\ge0}$ and the past $(X_{-n})_{n\ge0}$ are independent simple random walks with $X_0=x$ and transition probabilities
\[
p_\omega(x,y)=\frac{\one_{\{x\sim_\omega y\}}}{\deg_\omega(x)}.
\]
The map $(\omega,x)\mapsto\PP_x^\omega$ is Borel from $\{(\omega,x)\in M(X)\times X\colon x\in\omega\}$ to the space of Borel probability measures on $X^\ZZ$. It is also $G$-equivariant, that is, $\PP_{gx}^{g\omega}=g_*\PP_x^\omega$, where $G$ acts diagonally on $X^\ZZ$.

\begin{lemma}\label{lem:moments}
We have
\[
D_\lambda:=\EE_{\mu_o}\deg_\omega(o)<\infty,\,\, \ell_j:=\EE_{\mu_o}\sum_{y\sim_\omega o}d_{\mathbb{H}}(o,y)^j<\infty
\]
for $j=1,2$.
\end{lemma}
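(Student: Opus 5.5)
By Remark~\ref{rem:slivnyak}, under $\mu_o$ the configuration is $\Pi_\lambda\cup\{o\}$. So it suffices to bound
\[
\EE\sum_{y\in\Pi_\lambda}\one_{\{o\sim y\}}\,d_{\mathbb H}(o,y)^j, \qquad j=0,1,2,
\]
where $o\sim y$ refers to adjacency in the Delaunay graph of $\Pi_\lambda\cup\{o\}$; the case $j=0$ gives $D_\lambda$. By the Mecke identity this equals
\[
\lambda\int_X d_{\mathbb H}(o,y)^j\,\PP\bigl(o\sim y \text{ in } \Pi_\lambda\cup\{o,y\}\bigr)\,\dd\vol(y).
\]
Since $\vol(\{y: d_{\mathbb H}(o,y)\le r\})\asymp e^{(d-1)r}$, the plan is to show that the adjacency probability decays like $e^{-c\,e^{(d-1)r}}$, or at least faster than $e^{-(d-1+\epsilon)r}$, as $r=d_{\mathbb H}(o,y)\to\infty$. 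Any polynomial weight $r^j$ is then harmless.

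\textbf{Geometric criterion.} If $o$ and $y$ are Delaunay neighbours, their Voronoi cells meet, so there is a point $z$ with $d(z,o)=d(z,y)=\rho$ and no Poisson point in the open ball $B(z,\rho)$. Since $d(o,y)\le 2\rho$, we have $\rho\ge r/2$. The difficulty is that $z$ ranges over the whole bisector, which is non-compact, so a union bound over $z$ is needed.

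\textbf{Discretisation and union bound.} Fix a $1$-net of the bisector hyperplane $H$. If $B(z,\rho)$ is empty and $z'$ is a net point with $d(z,z')\le 1$, then $B(z',\rho-1)$ is empty. We also have $\rho=d(z,o)\ge d(m,o)+ c\,d(z,m)$, where $m$ is the midpoint of $o$ and $y$; this follows from hyperbolic geometry, since the distance from $o$ grows along $H$ like $\cosh\rho=\cosh(r/2)\cosh(d(z,m))$. The number of net points at distance about $s$ from $m$ is $O(e^{(d-2)s})$. Each such point has empty-ball probability at most
\[
\exp\bigl(-\lambda\vol B(\rho-1)\bigr)\le \exp\bigl(-c\lambda e^{(d-1)(r/2+s')}\bigr),
\]
where $s'\ge s-O(1)$ by the cosh identity. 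Summing over $s$ gives a total of $\exp(-c' e^{(d-1)r/2})$ for large $r$. This is superexponentially small, which beats the volume growth $e^{(d-1)r}$. For bounded $r$ we use the trivial bound $1$ on the probability, and the region is compact.

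The main obstacle is making this union bound over the non-compact bisector clean, in particular the inequality relating $\rho$ to $s$. I would first write down the explicit formula $\cosh d(z,o)=\cosh(r/2)\cosh d(z,m)$, which holds because $H$ is orthogonal to the geodesic $oy$ at $m$. The rest is routine integration.
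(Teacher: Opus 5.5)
Your argument is correct, but it follows a different route from the paper. The paper computes nothing directly. It cites Paquette's estimates \cite[Section~3]{Paq18} for finite moments of all orders of $\operatorname{diam}C_o(\omega)$ and $\deg_\omega(o)$. It then uses $d_{\mathbb H}(o,y)\le 2\operatorname{diam}C_o(\omega)$ for every neighbour $y$, since the cells meet at a point of the bisector. It concludes by Cauchy--Schwarz: $\ell_j\le 2^j(\EE_{\mu_o}\deg_\omega(o)^2)^{1/2}(\EE_{\mu_o}(\operatorname{diam}C_o(\omega))^{2j})^{1/2}$.

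You instead use the Slivnyak--Mecke description of $\mu_o$ to reduce everything to the two-point adjacency probability. You then bound that probability by an empty-ball union bound over a net of the bisector, using the right-angled identity $\cosh d_{\mathbb H}(z,o)=\cosh(r/2)\cosh d_{\mathbb H}(z,m)$. This makes the lemma self-contained. It gives an explicit tail $\PP(o\sim y)\le C\exp(-c\,e^{(d-1)d_{\mathbb H}(o,y)/2})$ for large distances. Hence $\EE_{\mu_o}\sum_{y\sim o}\phi(d_{\mathbb H}(o,y))<\infty$ for every polynomially, or even exponentially, growing $\phi$, without needing a second moment of the degree. The paper's route is shorter. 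Through the cited results it also controls higher moments of $\deg_\omega(o)$, which your one-point Mecke computation does not give; the lemma, however, does not need them.

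One small correction. The inequality $\rho\ge d_{\mathbb H}(m,o)+c\,d_{\mathbb H}(z,m)$ is false for small $d_{\mathbb H}(z,m)$, because $\rho-r/2$ is quadratic in $d_{\mathbb H}(z,m)$ near $0$. What the cosh identity gives, and what you actually use, is $\rho\ge r/2+d_{\mathbb H}(z,m)-\log 4$. Accordingly, the radius attached to each net point in the union bound should be a deterministic lower bound of this form, rather than the $z$-dependent quantity $\rho-1$.
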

\begin{proof}
The estimates in \cite[Section~3]{Paq18} give finite moments of every order for $\operatorname{diam}C_o(\omega)$ and $\deg_\omega(o)$. Since $d_{\mathbb{H}}(o,y)\le2\operatorname{diam}C_o(\omega)$ for every $y\sim_\omega o$, we have
\[
\ell_j\le 2^j\bigl(\EE_{\mu_o}\deg_\omega(o)^2\bigr)^{1/2}\bigl(\EE_{\mu_o}[(\operatorname{diam}C_o(\omega))^{2j}]\bigr)^{1/2}<\infty
\]
for $j=1,2$.
\end{proof}

Let $\widehat\mu_o$ be the degree-biased version of $\mu_o$ defined by
\[
\dd\widehat\mu_o(\omega)=\frac{\deg_\omega(o)}{D_\lambda}\,\dd\mu_o(\omega),
\]
which is equivalent to $\mu_o$ by Lemma~\ref{lem:moments} and is stationary for the environment seen from the walk, as shown below.

\begin{definition}
The \emph{path bundle} is the standard Borel space
\[
\mathcal W=K\backslash\bigl\{(\omega,(X_n)_{n\in\ZZ})\colon\omega\in M_o(X),\ X_0=o,\ X_n\sim_\omega X_{n+1}\text{ for all }n\in\ZZ\bigr\},
\]
where $K$ acts diagonally. We equip it with the probability measure
\[
\Lambda=\int_{\Omega_o}\delta_\omega\otimes\PP_o^\omega\,\dd\widehat\mu_o(\omega).
\]
In the integrand, $\delta_\omega\otimes\PP_o^\omega$ is viewed as a measure on $\mathcal W$ by taking the quotient by $K$. We use the same convention for measures on quotient spaces below. The \emph{bilateral shift} $T\colon\mathcal W\to\mathcal W$ is given by
\[
T\bigl(\omega,(X_n)_{n\in\ZZ}\bigr)=\bigl(g^{-1}\omega,(g^{-1}X_{n+1})_{n\in\ZZ}\bigr),
\]
where $g\in G$ satisfies $go=X_1$. Note that this is well defined by taking the left quotient by $K$.
\end{definition}

The following proposition follows by standard arguments; see, for example, \cite[Theorems~4.1 and~4.6]{AL07}.
\begin{proposition}\label{prop:shift}
The system $(\mathcal W,\Lambda,T)$ is invertible, measure-preserving, and ergodic.
\end{proposition}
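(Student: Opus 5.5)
We prove the three properties in turn, following the standard argument for the environment seen from the walk on unimodular random graphs; cf.\ \cite{AL07}.

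\emph{Invertibility.} Define $T^{-1}(\omega,(X_n))=(h^{-1}\omega,(h^{-1}X_{n-1})_{n})$, where $ho=X_{-1}$. The choice of $h$ is unique up to right multiplication by $K$, and this ambiguity disappears in the $K$-quotient, so $T^{-1}$ is well defined. Both compositions $T\circ T^{-1}$ and $T^{-1}\circ T$ are the identity, and both maps are Borel. Hence $T$ is a Borel bijection.

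\emph{Measure preservation.} It suffices to check $\Lambda(T^{-1}A)=\Lambda(A)$ for cylinder events depending on finitely many coordinates. Fix a nonnegative Borel $K$-invariant $F$ on $\mathcal W$. By the Markov property and the independence of past and future, the law of the shifted path is as follows. Under $\PP_o^\omega$, conditioned on $X_1=x$, the sequence $(X_{n+1})_{n\in\ZZ}$ has the law of a bilateral walk from $x$ with its past prefixed by the step $x\to o$. Its future is an SRW from $x$. Its past is an SRW from $o$ preceded by $x$. Recentering by $g$ with $go=x$, we get
\[
\int F\circ T\,\dd\Lambda=\frac1{D_\lambda}\EE_{\mu_o}\sum_{x\sim o}\Phi(o,x;\omega),
\]
where $\Phi(o,x;\omega)=\EE\bigl[F(g^{-1}\omega,g^{-1}(\text{path}))\bigr]$ and the expectation is over a path with $X_{-1}=o$, $X_0=x$, past from $o$ and future from $x$ independent SRWs. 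In this rewriting the weight $\deg(o)/D_\lambda$ from $\widehat\mu_o$ cancels against the transition probability $1/\deg(o)$. The quantity $T(o,x;\omega)=\one_{x\sim o}\Phi(o,x;\omega)$ is diagonally $G$-invariant, so the mass-transport principle (Theorem~\ref{thm:mass-transport}) lets us swap the roles of $o$ and $x$. After the swap, $\Phi(x,o;\omega)$ is exactly the expectation of $F$ over $\PP_o^\omega$ with the step $X_{-1}=x$ singled out. Summing over $x\sim o$ and reinserting $\deg(o)$ gives $\int F\,\dd\Lambda$. $K$-invariance ensures the choices of $g$ are immaterial.

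\emph{Ergodicity.} This is the main point. Let $A\subset\mathcal W$ be $T$-invariant, and set $f(\omega,X)=\one_A$. The plan is to show that $f$ is $\Lambda$-a.e.\ a function of $\omega$ alone, and that this function is $\cR$-invariant. Proposition~\ref{prop:rerooting} then forces it to be constant.

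First, approximate $\one_A$ in $L^1(\Lambda)$ by a function $f_N$ depending on $\omega$ and on the coordinates $X_{-N},\dots,X_N$. By invariance, $\one_A=\one_A\circ T^{n}$ for all $n$. Compare $\one_A\circ T^{2N}$ and $\one_A\circ T^{-2N}$: these are approximated by $f_N\circ T^{\pm 2N}$, which depend on disjoint windows of the path. Conditional on $\omega$ and $X_0$, these windows are conditionally independent given $X_{\pm N}$-type data. Rather than rely on that, use the cleaner route of \cite{AL07}. Set $h(\omega)=\PP_o^\omega$-expectation of $\one_A$. Invariance together with the Markov property shows that $\one_A$ is measurable with respect to the future tail and also with respect to the past tail. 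A standard Hopf-type argument then makes $\one_A$ a.s.\ equal to $h(\omega)$ evaluated along the walk; it uses independence of past and future given $\omega$. Consequently $h(\omega)\in\{0,1\}$ and $h(X_n\text{-rerooted})=h(\omega)$ for all $n$.

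Since the walk visits every vertex of the connected graph $\mathcal G_\omega$ with positive probability, $h$ is $\cR$-invariant $\widehat\mu_o$-a.e. Null sets are handled via the saturation $[N]_\cR$. Ergodicity of $\cR$ (Proposition~\ref{prop:rerooting}), together with $\widehat\mu_o\sim\mu_o$, gives that $h$ is constant. Hence $\Lambda(A)\in\{0,1\}$.

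The delicate step is the passage from $T$-invariance to dependence on $\omega$ only. I would carry it out by the reversibility argument of \cite[Theorem~4.6]{AL07}, using bilateral independence to identify invariant events with events in the intersection of the past and future $\sigma$-fields given $\omega$.
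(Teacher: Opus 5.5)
Your proposal is correct and follows the paper's route: the backward shift for invertibility, the mass-transport principle with the degree bias cancelling $1/\deg_\omega(o)$ for invariance (your path-level transport is a more explicit version of the paper's cylinder argument), and for ergodicity showing $h(\omega)=\PP_o^\omega(A)\in\{0,1\}$ from conditional independence of past and future given $\omega$, then $\cR$-invariance and Proposition~\ref{prop:rerooting}. The step you call delicate is closed by the argument you abandoned. Take $f_N$ to be the indicator of a cylinder event. Since $X_0=o$ is deterministic, $f_N\circ T^{2N}$ and $f_N\circ T^{-2N}$ are conditionally independent given $\omega$ alone, with no $X_{\pm N}$-data needed, and this gives $\|h-h^2\|_{L^1(\widehat\mu_o)}\le4\varepsilon$ exactly as in the paper. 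It is this approximation together with $T$-invariance of $\Lambda$, not the Markov property, that places $A$ in both the past and the future $\sigma$-fields.
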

\begin{proof}
The inverse of $T$ shifts the path one step backward and moves $X_{-1}$ to $o$. For nonnegative Borel functions $a,b$ on $\Omega_o$, the mass-transport principle gives
\[
\EE_{\mu_o}\sum_{gK\sim_\omega o}a(\omega)b(g^{-1}\omega)
=\EE_{\mu_o}\sum_{gK\sim_\omega o}b(\omega)a(g^{-1}\omega).
\]
The factor $\deg_\omega(o)$ in $\widehat\mu_o$ cancels the denominator in the transition probabilities, so the chain of rooted configurations is reversible under $\widehat\mu_o$. Applying the same argument to finite cylinders shows that $T$ preserves $\Lambda$.

For a Borel set $C\subset\mathcal W$, let
\[
f_C(\omega)=\PP_o^\omega(\{w\in X^\ZZ\colon(\omega,w)\in C\}).
\]
Let $A\subset\mathcal W$ be $T$-invariant. We first show that $f_A(\omega)\in\{0,1\}$ for $\widehat\mu_o$-almost every $\omega$. For $\varepsilon>0$, there are $N\ge1$ and a Borel set $A_N$ depending only on $(\omega,(X_n)_{-N\le n\le N})$ such that $\Lambda(A\mathbin\triangle A_N)<\varepsilon$. The events $T^NA_N$ and $T^{-N}A_N$ depend respectively on the past and the future. Therefore, conditional independence given $\omega$ gives
\[
f_{T^NA_N\cap T^{-N}A_N}=f_{T^NA_N}f_{T^{-N}A_N}
\]
for $\widehat\mu_o$-almost every $\omega$. Since $T$ preserves $\Lambda$ and $A$ is $T$-invariant, we obtain
\begin{align*}
\|f_A-f_A^2\|_1
&\le\|f_A-f_{T^NA_N}f_{T^{-N}A_N}\|_1+2\varepsilon\\
&=\|f_A-f_{T^NA_N\cap T^{-N}A_N}\|_1+2\varepsilon\\
&\le\Lambda\bigl(A\mathbin\triangle(T^NA_N\cap T^{-N}A_N)\bigr)+2\varepsilon<4\varepsilon,
\end{align*}
where $\|\cdot\|_1$ is the norm on $L^1(\Omega_o,\widehat\mu_o)$. Letting $\varepsilon\to0$ proves that $f_A(\omega)\in\{0,1\}$ for $\widehat\mu_o$-almost every $\omega$. Hence $\one_A(\omega,w)=f_A(\omega)$ for $\Lambda$-almost every $(\omega,w)$.

The $T$-invariance of $A$ now gives
\[
f_A(\omega)=\frac1{\deg_\omega(o)}\sum_{gK\sim_\omega o}f_A(g^{-1}\omega)
\]
for $\widehat\mu_o$-almost every $\omega$. Since $f_A$ takes values in $\{0,1\}$, its value is unchanged at every neighboring root. By removing the $\cR$-saturation of the exceptional null set, we obtain that $f_A$ is $\cR$-invariant. By Proposition~\ref{prop:rerooting}, $f_A$ is constant for $\widehat\mu_o$-almost every $\omega$, which implies $\Lambda(A)\in\{0,1\}$.
\end{proof}

By \cite{BKP21}, the simple random walk has positive hyperbolic speed. More precisely, there exists $\ell>0$ such that, for $\mu_o$-almost every $\omega$ and every $x\in\omega$, we have
\[
\frac1n d_{\mathbb{H}}(o,X_{\pm n})\to\ell
\]
$\PP_x^\omega$-almost surely. The moment bound above, stationarity, and the Borel--Cantelli lemma allow us to apply Kaimanovich's regularity criterion \cite[Theorem~7.2]{Kai00}, which yields that almost every bilateral sample path determines unique points $\Xi_+,\Xi_-\in B$ such that
\[
\frac1n d_{\mathbb{H}}\bigl(X_{\pm n},\Xi_\pm(\ell n)\bigr)\to0,
\]
where each $\xi\in B$ is identified with the unit-speed geodesic ray from $o$ ending at $\xi$.

The \emph{boundary maps} are given by
\[
\operatorname{bnd}_\pm((X_n)_{n\in\ZZ})=\lim_{n\to\infty}X_{\pm n}=\Xi_\pm.
\]
For $\omega\in M(X)$ and $x\in\omega$, we define the \emph{harmonic measure} $\nu_{\omega,x}$ by
\[
\nu_{\omega,x}=(\operatorname{bnd}_+)_*\PP_x^\omega=(\operatorname{bnd}_-)_*\PP_x^\omega.
\]
The harmonic measures are measurable by construction. For $g\in G$ and adjacent $x,y\in\omega$, equivariance and the first-step decomposition give
\[
\nu_{g\omega,gx}=g_*\nu_{\omega,x},\,\, \nu_{\omega,x}\ge\deg_\omega(x)^{-1}\nu_{\omega,y}.
\]
Interchanging $x,y$ shows that the measure class is independent of the starting vertex. In particular,
\[
\nu_{\omega,o}\sim\nu_{\omega,x}
\]
for every $\omega\in M_o(X)$ and $x\in\omega$.

Next we introduce the (double) boundary bundle and the extended rerooting relation on it, which play crucial roles in our proof.

\begin{definition}
The \emph{boundary bundle} is the standard Borel space
\[
\mathcal B=K\backslash\bigl(M_o(X)\times B^2\bigr),
\]
equipped with the probability measure
\[
Q=\int_{\Omega_o}\delta_\omega\otimes\nu_{\omega,o}\otimes\nu_{\omega,o}\,\dd\mu_o(\omega).
\]
The \emph{boundary extension} of the rerooting relation is given by
\[
\widehat\cR=\bigl\{\bigl((\omega,\xi,\eta),(g^{-1}\omega,g^{-1}\xi,g^{-1}\eta)\bigr)\in\mathcal B^2\colon gK\in\omega\bigr\},
\]
which is a countable Borel equivalence relation on $\mathcal B$.
\end{definition}

\begin{corollary}[Double ergodicity]\label{cor:double-ergodicity}
The boundary extension $\widehat\cR$ is nonsingular and ergodic with respect to $Q$.
\end{corollary}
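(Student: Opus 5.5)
Nonsingularity and ergodicity will both be obtained by realizing $(\mathcal B,Q)$ as a factor of the path bundle and transferring Proposition~\ref{prop:shift}.

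\emph{Nonsingularity.} Since $\cR$ preserves $\mu_o$ (Proposition~\ref{prop:rerooting}), it suffices to compare fibres. Over an $\cR$-related pair $(\omega,g^{-1}\omega)$ with $gK\in\omega$, the fibre measures are $\nu_{\omega,o}^{\otimes2}$ and $\nu_{g^{-1}\omega,o}^{\otimes 2}$. By equivariance, the partial isomorphism $(\omega,\xi,\eta)\mapsto(g^{-1}\omega,g^{-1}\xi,g^{-1}\eta)$ transports the first of these to $g^{-1}_*\nu_{\omega,o}^{\otimes2}$. The second equals $g^{-1}_*\nu_{\omega,go}^{\otimes2}$. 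Since $\nu_{\omega,o}\sim\nu_{\omega,x}$ for every $x\in\omega$, these two measures are equivalent. Decomposing an arbitrary partial isomorphism into countably many pieces on which the rerooting vertex $gK$ is chosen Borel-measurably (Lusin--Novikov), we conclude that null sets are preserved.

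\emph{Ergodicity.} Consider the Borel map
\[
\Phi\colon\mathcal W\to\mathcal B,\qquad \Phi(\omega,w)=(\omega,\operatorname{bnd}_+w,\operatorname{bnd}_-w).
\]
It is well defined on the $K$-quotient by equivariance of the boundary maps. Since the past and future of the walk are independent given $\omega$ and each has boundary law $\nu_{\omega,o}$, we get $\Phi_*\Lambda=\int\delta_\omega\otimes\nu_{\omega,o}^{\otimes2}\,\dd\widehat\mu_o$. This measure is equivalent to $Q$, because $\widehat\mu_o\sim\mu_o$.

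Now let $A\subset\mathcal B$ be $\widehat\cR$-invariant. The boundary points of a path do not change under shifting, so
\[
\Phi(T(\omega,w))=(g^{-1}\omega,g^{-1}\operatorname{bnd}_+w,g^{-1}\operatorname{bnd}_-w)
\]
with $go=X_1\in\omega$. This point is $\widehat\cR$-related to $\Phi(\omega,w)$. Hence $\Phi^{-1}(A)$ is $T$-invariant, up to the null set where the boundary maps are undefined. Proposition~\ref{prop:shift} then gives $\Lambda(\Phi^{-1}A)\in\{0,1\}$, so $\Phi_*\Lambda(A)\in\{0,1\}$, and by equivalence $Q(A)\in\{0,1\}$.

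The main point requiring care is measure-theoretic bookkeeping. We must ensure that $\Phi$ is defined on a $T$-invariant conull set; this holds because convergence to the boundary is a shift-invariant tail property. We must also check that the invariant set $A$ may be modified on $\widehat\cR$-saturations of null sets without affecting the argument, which is exactly where the nonsingularity established first is used.
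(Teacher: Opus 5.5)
Your proposal is correct and follows the paper's proof essentially step for step. Nonsingularity comes from equivariance together with $\nu_{\omega,o}\sim\nu_{\omega,x}$, and ergodicity comes from pushing $\Lambda$ forward along the boundary map $\mathcal W\to\mathcal B$, which gives a measure equivalent to $Q$ (density $\deg_\omega(o)/D_\lambda$), and then invoking ergodicity of $T$. One small correction to your closing remark: nonsingularity is not needed in the ergodicity step. Since $A$ is a genuinely $\widehat\cR$-invariant Borel set, $\Phi^{-1}(A)$ is already $T$-invariant on the conull shift-invariant set where the boundary maps are defined, so no modification of $A$ is required.
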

\begin{proof}
Let $\phi\colon A\to D$ be a partial isomorphism of $\cR$. Its lift to the boundary bundle is
\[
\widehat\phi(\omega,\xi,\eta)=(g^{-1}\omega,g^{-1}\xi,g^{-1}\eta),
\]
where $\phi(\omega)=g^{-1}\omega$. Equivariance of harmonic measure and equivalence under a change of root give
\[
(g^{-1})_*\nu_{\omega,o}=\nu_{g^{-1}\omega,g^{-1}o}\sim\nu_{g^{-1}\omega,o}.
\]
Thus the lift preserves the measure class of the conditional measures $\nu_{\omega,o}^{\otimes2}$. Since $\phi$ preserves $\mu_o$, both $\widehat\phi$ and its inverse preserve $Q$-null sets. Since these lifts generate $\widehat\cR$, the relation $\widehat\cR$ is nonsingular with respect to $Q$.

Let $b\colon \mathcal W\to\mathcal B$ be the map given by
\[
b\bigl(\omega,(X_n)_{n\in\ZZ}\bigr)=(\omega,\Xi_-,\Xi_+),
\]
which sends every $T$-orbit into a $\widehat\cR$-class. Conditional independence of the past and future and the definition of $\widehat\mu_o$ give
\[
\frac{\dd(b_*\Lambda)}{\dd Q}(\omega,\xi,\eta)=\frac{\deg_\omega(o)}{D_\lambda},
\]
and hence $b_*\Lambda\sim Q$.
If $A\subset\mathcal B$ is $\widehat\cR$-invariant, then $b^{-1}(A)$ is $T$-invariant, and hence $\Lambda(b^{-1}(A))\in\{0,1\}$ by Proposition~\ref{prop:shift}. Therefore, $Q(A)\in\{0,1\}$ by $b_*\Lambda\sim Q$.
\end{proof}

For a measure $\nu$ and a measurable function $f$, we write $\nu(f)=\int f\,\dd\nu$ whenever the integral is defined. For a bounded Borel function $f\colon B\to\RR$, we define its \emph{harmonic extension} $H_\omega f$ on $\mathcal G_\omega$ by
\[
H_\omega f(x)=\nu_{\omega,x}(f)
\]
for $x\in\omega$. The Markov property shows that $H_\omega f$ is harmonic and that
\[
H_\omega f(X_n)=\EE_x^\omega[f(\Xi_+)\mid X_0,\ldots,X_n]\to f(\Xi_+)
\]
$\PP_x^\omega$-almost surely, by the martingale convergence theorem.

\subsection{Dichotomy for measure classes}
To compare harmonic measure with Lebesgue measure, we define another probability measure on $\mathcal B$ by
\[
Q_m=\mu_o\otimes m^{\otimes2}.
\]
The relation $\widehat\cR$ is nonsingular with respect to $Q_m$ as well. Indeed, its base maps preserve $\mu_o$, and the $G$-action preserves the measure class of $m$.

\begin{samepage}
\begin{lemma}\label{lem:pure-type}
For the Poisson point process on $X=\HH$, one of the following holds:
\begin{enumerate}
\item For $\mu_o$-almost every $\omega$ and every $x\in\omega$, we have $\nu_{\omega,x}\ll m$.
\item For $\mu_o$-almost every $\omega$ and every $x\in\omega$, we have $\nu_{\omega,x}\perp m$.
\end{enumerate}
\end{lemma}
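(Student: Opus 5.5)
We plan to deduce the dichotomy from ergodicity of the rerooting relation (Proposition~\ref{prop:rerooting}), using only the single-boundary information; double ergodicity is not needed here. The first step is the Lebesgue decomposition of the harmonic measure at the root. We write $\nu_{\omega,o}=\nu^{ac}_{\omega}+\nu^{s}_{\omega}$ with $\nu^{ac}_\omega\ll m$ and $\nu^s_\omega\perp m$, and consider the two sets
\[
A_{ac}=\{\omega\in\Omega_o\colon \nu^s_\omega=0\},\qquad A_s=\{\omega\in\Omega_o\colon \nu^{ac}_\omega=0\}.
\]
Both are well defined on $\Omega_o$, since for $k\in K$ we have $\nu_{k\omega,o}=k_*\nu_{\omega,o}$ and $k_*m=m$, so the decomposition is $K$-equivariant. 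We will check Borel measurability by standard means. One option is to write the density of the absolutely continuous part as a limit of ratios $\nu_{\omega,o}(P)/m(P)$ along a fixed refining sequence of finite Borel partitions of $\mathbb S^{d-1}$ (martingale convergence with respect to $m$). Then $\nu^{ac}_\omega(B)$ is a Borel function of $\omega$.

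The second step is to show that the mass $\omega\mapsto\nu^{ac}_\omega(B)\in[0,1]$ is ``class-invariant'' in the following sense. The sets $A_{ac}$ and $A_s$ are $\cR$-invariant, and more generally the property ``$\nu^{ac}_\omega\ne0$'' is $\cR$-invariant. Take $(\omega,g^{-1}\omega)\in\cR$ with $gK\in\omega$. By equivariance,
\[
\nu_{g^{-1}\omega,o}=(g^{-1})_*\nu_{\omega,go}.
\]
The measure class of $\nu_{\omega,go}$ equals that of $\nu_{\omega,o}$ (the root-change equivalence stated before the definition of $\mathcal B$). The isometry $g^{-1}$ acts on $\mathbb S^{d-1}$ as a conformal diffeomorphism, so it preserves the measure class of $m$. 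Equivalent measures have the same null sets, so their absolutely continuous and singular parts are mutually equivalent. Pushforward by a map preserving the class of $m$ sends absolutely continuous parts to absolutely continuous parts and singular parts to singular parts. Hence $\nu_{\omega,o}\ll m$ if and only if $\nu_{g^{-1}\omega,o}\ll m$, and similarly for $\perp$. Thus $A_{ac}$ and $A_s$ are $\cR$-invariant.

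By Proposition~\ref{prop:rerooting}, each of these sets is null or conull. We still have to exclude the mixed case, in which both parts are nonzero almost surely. For this we use ergodicity once more, now applied to the measures themselves rather than to sets of configurations. Define $\nu'_{\omega,x}$ to be the absolutely continuous part of $\nu_{\omega,x}$. We expect the first-step decomposition
\[
\nu_{\omega,x}=\frac{1}{\deg_\omega(x)}\sum_{y\sim x}\nu_{\omega,y}
\]
to pass to Lebesgue parts by linearity and uniqueness of the decomposition. Then $x\mapsto\nu'_{\omega,x}(B)$ is a bounded harmonic function on $\mathcal G_\omega$.

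The main obstacle is to conclude that this harmonic function is constant, equal to $0$ or $1$, when bounded harmonic functions need not be constant. Here we use the boundary. The function $h(x)=\nu'_{\omega,x}(B)$ has the form $H_\omega f$ with $f=\dd\nu'/\dd\nu=\one_E$, where $E$ is a set carrying $\nu^{ac}$ and $m$ charges only $E$; by harmonicity the set $E$ can be chosen independently of the root. Along the walk, $h(X_n)\to\one_E(\Xi_+)$ almost surely. The set $E$ is determined $G$-equivariantly as the $\nu$-class-maximal $m$-carrying set. Hence the pair $(\omega,\xi)\mapsto\one_{E_\omega}(\xi)$ is an $\widehat\cR$-invariant function on the boundary bundle, which we view on $\mathcal B$ via its first boundary coordinate. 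By Corollary~\ref{cor:double-ergodicity} this function is $Q$-a.e.\ constant. Constant value $1$ gives case (1), and constant value $0$ gives $\nu^{ac}=0$, which is case (2). Finally, the statement ``for every $x\in\omega$'' follows by discarding the $\cR$-saturation $[N]_\cR$ of the exceptional null set.
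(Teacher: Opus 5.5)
Your argument is correct, but it takes a different route from the paper's. The paper never decomposes the individual measures $\nu_{\omega,o}$. Instead it:
\begin{enumerate}
\item takes the Lebesgue decomposition of the global measure $Q$ with respect to $Q_m$ on the double bundle;
\item uses nonsingularity of $\widehat\cR$ for $Q_m$ to see that the $\widehat\cR$-saturation of a $Q_m$-null carrier of the singular part is an invariant $Q_m$-null set;
\item applies Corollary~\ref{cor:double-ergodicity} to that saturation;
\item passes from $\nu_{\omega,o}^{\otimes2}$ to $\nu_{\omega,o}$ by taking marginals and by the $\alpha\otimes\alpha$ argument.
\end{enumerate}
You decompose fiberwise instead. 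Since $\nu_{\omega,x}\sim\nu_{\omega,o}$ and $G$ preserves the class of $m$, the carrier $E_\omega$ of the absolutely continuous part is root-independent and $G$-equivariant modulo null sets. So $\one_{E_\omega}(\xi)$ is an invariant function of a single boundary coordinate, and its constant value decides the case. This avoids the passage from the product measure and uses only the one-coordinate factor of the boundary extension. The cost is that you must choose $E_\omega$ jointly measurably and $K$-equivariantly, for instance as the set where the derivative of $\nu_{\omega,o}$ along spherical caps is finite. The paper sidesteps this by working with one global null set.

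Some smaller points. Your first step (the sets $A_{ac}$, $A_s$) and the harmonic function $h$ play no role once you apply ergodicity to $\one_{E_\omega}(\xi)$. Your opening claim that double ergodicity is not needed contradicts your final appeal to Corollary~\ref{cor:double-ergodicity}. Also, $E_\omega$ is the $\nu$-essentially \emph{minimal} $m$-conull set, not the maximal one.

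You can in fact make the opening claim true by finishing the harmonic-function route. For $F(\omega)=\nu'_{\omega,o}(B)$ one has $F\circ T=h_\omega(X_1)$ and $\EE_\Lambda[F\circ T\mid\omega]=F(\omega)$. Then $T$-invariance of $\Lambda$ (Proposition~\ref{prop:shift}) gives $\EE_\Lambda[(F\circ T-F)^2]=0$, so $h$ is constant on $\mathcal G_\omega$. The convergence $h(X_n)\to\one_{E_\omega}(\Xi_+)$ then forces the constant into $\{0,1\}$, and Proposition~\ref{prop:rerooting} finishes the proof with no boundary ergodicity at all.
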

\end{samepage}
\begin{proof}
Let $Q=Q_{\mathrm a}+Q_{\mathrm s}$ be the Lebesgue decomposition with respect to $Q_m$, and let $N\subset\mathcal B$ be Borel with $Q_m(N)=0$ and $Q_{\mathrm s}(\mathcal B\setminus N)=0$. Since $\widehat\cR$ is countable and nonsingular for $Q_m$, its saturation $[N]_{\widehat\cR}$ is $Q_m$-null. This saturation is a union of $\widehat\cR$-classes and is therefore $\widehat\cR$-invariant. By Corollary~\ref{cor:double-ergodicity}, it is $Q$-null or $Q$-conull. In the first case, $Q_{\mathrm s}=0$, and then $Q\ll Q_m$. In the second case, $Q$ is supported on a $Q_m$-null set, and then $Q\perp Q_m$.

Suppose first that $Q\ll Q_m$, and let $q=\dd Q/\dd Q_m$. Lifting to $M_o(X)\times B^2$ and disintegrating over $M_o(X)$ gives
\[
\nu_{\omega,o}^{\otimes2}=q(\omega,\cdot,\cdot)\,m^{\otimes2}
\]
for $\mu_o$-almost every $\omega$. Taking the first marginal of each side gives $\nu_{\omega,o}\ll m$ for $\mu_o$-almost every $\omega$. If $Q\perp Q_m$, let $N$ be a Borel set with $Q(N)=1$ and $Q_m(N)=0$, and let $N_\omega=\{(\xi,\eta)\in B^2\colon (\omega,\xi,\eta)\in N\}$. Disintegrating the lifted measures gives, for $\mu_o$-almost every $\omega$,
\[
\nu_{\omega,o}^{\otimes2}(N_\omega)=1,\,\, m^{\otimes2}(N_\omega)=0.
\]
Thus $\nu_{\omega,o}^{\otimes2}\perp m^{\otimes2}$ for $\mu_o$-almost every $\omega$. Suppose, for a contradiction, that $\nu_{\omega,o}$ has a nonzero absolutely continuous part $\alpha$. Then we have $0\ne\alpha\otimes\alpha\le\nu_{\omega,o}^{\otimes2}$ and $\alpha\otimes\alpha\ll m^{\otimes2}$, contradicting $\nu_{\omega,o}^{\otimes2}\perp m^{\otimes2}$. Hence $\nu_{\omega,o}\perp m$ for $\mu_o$-almost every $\omega$. Since $\nu_{\omega,x}\sim\nu_{\omega,o}$ for every $x\in\omega$, absolute continuity or singularity holds at every vertex.
\end{proof}

\section{Currents on the boundary}\label{sec:current}
Let $B^{(2)}=\{(\xi,\eta)\in B^2\colon \xi\ne\eta\}$ be the space of pairs of distinct boundary points. The \emph{Liouville current} on $B^{(2)}$ is given by
\[
\dd L(\xi,\eta)=\frac{\dd m(\xi)\,\dd m(\eta)}{|\xi-\eta|^{2(d-1)}}.
\]
It is straightforward to check that $L$ is invariant under the diagonal $G$-action, Radon, and equivalent to $m^{\otimes2}|_{B^{(2)}}$.

\subsection{Na\"im currents}\label{subsec:naim}
We first construct the Na\"im current for a fixed graph $\mathcal G_\omega$. Let $W_\omega$ be the space of bilateral paths in $\mathcal G_\omega$ which visit each vertex only finitely often and converge to $B$ in both directions. Let $\theta\colon W_\omega\to W_\omega$ be the time shift defined by
\[
\theta((X_n)_{n\in\ZZ})=(X_{n+1})_{n\in\ZZ},
\]
and let $P_\omega$ be the measure on $W_\omega$ defined by
\[
P_\omega=\sum_{x\in\omega}\deg_\omega(x)\PP_x^\omega.
\]
Note that $\theta$ changes only the time origin and leaves $\omega$ fixed, unlike the rerooting shift $T$. For adjacent $x,y\in\omega$, the detailed-balance identity
\[
\deg_\omega(x)p_\omega(x,y)=\deg_\omega(y)p_\omega(y,x)=1
\]
shows that $P_\omega$ is invariant under $\theta$ and time reversal.

By transience, $d_{\mathbb{H}}(o,X_n)$ tends to infinity as $n\to\pm\infty$ and attains its minimum at only finitely many times. Let $\mathcal D_\omega$ consist of the paths for which this minimum is first attained at time zero, giving a Borel fundamental domain for the shift. Let $q_\omega\colon W_\omega\to W_\omega^*=W_\omega/\langle\theta\rangle$ be the factor map corresponding to the $\theta$-invariant Borel $\sigma$-algebra. The fundamental domain identifies $W_\omega^*$ with a standard Borel space. The measure
\[
I_\omega=(q_\omega)_*(P_\omega|_{\mathcal D_\omega})
\]
is $\sigma$-finite. If $\mathcal D'_\omega$ is another Borel fundamental domain, partitioning $\mathcal D_\omega$ according to the shift taking its points into $\mathcal D'_\omega$ and using $\theta$-invariance of $P_\omega$ gives
\[
P_\omega(\mathcal D_\omega\cap q_\omega^{-1}(A))=P_\omega(\mathcal D'_\omega\cap q_\omega^{-1}(A))
\]
for every Borel set $A\subset W_\omega^*$, i.e., $I_\omega$ does not depend on the fundamental domain. The measure $I_\omega$ is preserved by time reversal, since time reversal preserves $P_\omega$ and takes a fundamental domain to another one. Let $J_\omega$ be the image of $I_\omega$ on $B^2$ under the boundary maps:
\[
J_\omega=(\operatorname{bnd}_-,\operatorname{bnd}_+)_*I_\omega.
\]

For a finite set $F\subset\omega$, let
\[
W_{\omega,F}^*=\{q_\omega(w)\colon w\in W_\omega,\ X_n\in F\text{ for some }n\in\ZZ\},
\]
and let
\[
J_{\omega,F}=(\operatorname{bnd}_-,\operatorname{bnd}_+)_*(I_\omega|_{W_{\omega,F}^*}).
\]
For the one-sided walk, let
$\tau_F=\inf\{n\ge0\colon X_n\in F\}$ and $\tau_F^+=\inf\{n\ge1\colon X_n\in F\}$.
For $x\in F$ and Borel $A\subset B$, let
\[
\rho_{\omega,F,x}(A)=\deg_\omega(x)\PP_x^\omega(\Xi_+\in A,\tau_F^+=\infty).
\]

\begin{lemma}\label{lem:current-construction}
For every finite $F\subset\omega$, we have
\[
J_{\omega,F}=\sum_{x\in F}\rho_{\omega,F,x}\otimes\nu_{\omega,x}.
\]
The measures $J_{\omega,F}$ are finite and symmetric. For every increasing exhaustion $(F_n)_{n\ge1}$ of $\omega$ by finite subsets,
\[
J_{\omega,F_n}(A)\uparrow J_\omega(A)
\]
for every Borel set $A\subset B^2$. Moreover, $J_\omega\sim\nu_{\omega,x}^{\otimes2}$ for every $x\in\omega$.
\end{lemma}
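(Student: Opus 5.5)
The plan is to compute $J_{\omega,F}$ using a fundamental domain adapted to $F$: the last-exit decomposition. A path in $W_\omega$ that visits $F$ does so only finitely often, by the definition of $W_\omega$. Among the paths whose class lies in $W_{\omega,F}^*$, we take the Borel fundamental domain $\mathcal D_{\omega,F}$ of those whose \emph{last} visit to $F$ occurs at time $0$. We then extend it by $\mathcal D_\omega$ on the complement of $q_\omega^{-1}(W^*_{\omega,F})$. Since $I_\omega$ does not depend on the fundamental domain, this gives
\[
J_{\omega,F}=(\operatorname{bnd}_-,\operatorname{bnd}_+)_*\bigl(P_\omega|_{\mathcal D_{\omega,F}}\bigr).
\]
Next we split according to $X_0=x\in F$. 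Under $\deg_\omega(x)\PP_x^\omega$, the past and the future are independent. The condition that time $0$ is the last visit is $\{X_n\notin F\ \forall n\ge1\}$, which constrains only the future; it is exactly $\tau_F^+=\infty$ for the forward walk. Thus the future boundary point carries $\rho_{\omega,F,x}$ and the past boundary point carries $\nu_{\omega,x}$. This gives $\sum_{x\in F}\nu_{\omega,x}\otimes\rho_{\omega,F,x}$ on the pair $(\operatorname{bnd}_-,\operatorname{bnd}_+)$, where the order of the factors is to be matched with the statement via symmetry. For symmetry itself, we use that time reversal preserves $P_\omega$ and maps $W^*_{\omega,F}$ to itself, so $I_\omega|_{W^*_{\omega,F}}$ is reversal invariant and its boundary image is invariant under swapping coordinates. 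Equivalently, we could use the first-entrance domain instead. Finiteness is immediate: the total mass is at most $\sum_{x\in F}\deg_\omega(x)<\infty$.

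For monotone convergence, the sets $W^*_{\omega,F_n}$ increase. Their union is all of $W^*_\omega$, since every path visits some vertex and every vertex lies in some $F_n$. Monotone convergence of the restricted measures $I_\omega|_{W^*_{\omega,F_n}}$ then gives $J_{\omega,F_n}(A)\uparrow J_\omega(A)$.

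For the equivalence $J_\omega\sim\nu_{\omega,x}^{\otimes 2}$, the upper bound is easy. We have $\rho_{\omega,F,y}\le\deg_\omega(y)\nu_{\omega,y}$, and $\nu_{\omega,y}\sim\nu_{\omega,x}$, so each $J_{\omega,F}\ll\nu_{\omega,x}^{\otimes2}$. Passing to the countable increasing limit gives $J_\omega\ll\nu_{\omega,x}^{\otimes2}$.

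The converse is the main obstacle. Fix a set $A$ with $\nu_{\omega,x}^{\otimes2}(A)>0$; we must show $J_\omega(A)>0$. Since the classes agree, this is equivalent to $\PP_x^\omega\otimes\PP_x^\omega((\Xi,\Xi')\in A)>0$ for independent walks. The difficulty is the escape condition $\tau_F^+=\infty$, which cuts down $\rho$. I would write $\nu_{\omega,x}$ as a mixture over the last exit from the ball $F=\{x\}$ or larger finite sets. Concretely, for the forward walk from $x$, decompose according to the last time the walk is in a finite set $F\ni x$ (a.s. finite by transience). This gives $\nu_{\omega,x}\le\sum_{y\in F}G_F(x,y)\,\rho_{\omega,F,y}/\deg_\omega(y)$, with Green-function-type coefficients $G_F(x,y)<\infty$. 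Hence $\nu_{\omega,x}\ll\sum_{y\in F}\rho_{\omega,F,y}$ for every finite $F$. Applying this in one coordinate together with $\nu_{\omega,y}\sim\nu_{\omega,x}$ in the other, we get $\nu_{\omega,x}^{\otimes2}\ll\sum_y\rho_{\omega,F,y}\otimes\nu_{\omega,x}$. It remains to upgrade this to the product form $\sum_y\rho_{\omega,F,y}\otimes\nu_{\omega,y}$. This follows since $\nu_{\omega,y}\ge c_y\,\nu_{\omega,x}$ for some constant $c_y>0$, obtained by iterating the one-step bound along a path from $y$ to $x$. Therefore $\nu_{\omega,x}^{\otimes2}\ll J_{\omega,F}\le J_\omega$, completing the proof.
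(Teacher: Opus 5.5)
Your proof is correct and follows the paper's proof: the paper uses the first-entrance fundamental domain, while you use the last-exit one, which is its time reversal; that is why you need the symmetry step to put the factors in the order $\rho_{\omega,F,x}\otimes\nu_{\omega,x}$, and otherwise the argument is the same. For the reverse absolute continuity, your last-exit/Green-function decomposition for a general $F\ni x$ reduces at $F=\{x\}$ to the paper's one-line argument: there $\rho_{\omega,\{x\},x}=\deg_\omega(x)\PP_x^\omega(\tau_{\{x\}}^+=\infty)\,\nu_{\omega,x}$, so $J_{\omega,\{x\}}$ is already a positive multiple of $\nu_{\omega,x}^{\otimes 2}$ and the upgrade step with the constants $c_y$ is unnecessary.
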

\begin{proof}
For $w^*\in W_{\omega,F}^*$, transience allows us to choose a representative $(X_n)_{n\in\ZZ}$ such that
\[
X_0\in F,\,\, X_n\notin F\,\,\text{for every }n<0.
\]
Given $X_0=x\in F$, the past is conditioned to avoid $F$ before time zero, while the future is an independent walk from $x$. Thus, for Borel sets $A,C\subset B$, we have
\[
P_\omega(X_0=x,\ \{X_{-n}\colon n\ge1\}\cap F=\varnothing,\ \Xi_-\in A,\ \Xi_+\in C)
=\rho_{\omega,F,x}(A)\nu_{\omega,x}(C).
\]
Summing over $x\in F$ gives the description of $J_{\omega,F}$, which satisfies
\[
J_{\omega,F}(B^2)=\sum_{x\in F}\rho_{\omega,F,x}(B)\le\sum_{x\in F}\deg_\omega(x)<\infty.
\]
Further, since time reversal preserves $I_\omega$ and $W_{\omega,F}^*$, the measure $J_{\omega,F}$ is symmetric. For every increasing exhaustion $(F_n)_{n\ge1}$ of $\omega$, we have $W_{\omega,F_n}^*\uparrow W_\omega^*$. Hence $J_{\omega,F_n}(A)\uparrow J_\omega(A)$ for every Borel set $A\subset B^2$.

It remains to show that $J_\omega$ is equivalent to $\nu_{\omega,x}^{\otimes2}$ for every $x\in\omega$. Since $\rho_{\omega,F,y}\le\deg_\omega(y)\nu_{\omega,y}$ and $\nu_{\omega,y}\sim\nu_{\omega,x}$, the description of $J_{\omega,F}$ gives $J_{\omega,F}\ll\nu_{\omega,x}^{\otimes2}$, which gives $J_\omega\ll\nu_{\omega,x}^{\otimes2}$ as $F$ increases to $\omega$. For the reverse direction, it suffices to consider $F=\{x\}$: indeed, the strong Markov property gives
\[
\nu_{\omega,x}(A)=\PP_x^\omega(\Xi_+\in A,\tau_{\{x\}}^+=\infty)+\PP_x^\omega(\tau_{\{x\}}^+<\infty)\nu_{\omega,x}(A)
\]
for every Borel subset $A\subset B$, and hence
\[
J_{\omega,\{x\}}=\deg_\omega(x)\PP_x^\omega(\tau_{\{x\}}^+=\infty)\nu_{\omega,x}^{\otimes2}\le J_\omega,
\]
which completes the proof by transience.
\end{proof}

We call the restriction of $J_\omega$ to $B^{(2)}$ the \emph{Na\"im current} of $\mathcal G_\omega$, and also denote it by $J_\omega$.

Note that the maps $\omega\mapsto I_\omega$ and $\omega\mapsto J_\omega$ are measurable by construction. For $g\in G$, we have $P_{g\omega}=g_*P_\omega$, and $g\mathcal D_\omega$ is a fundamental domain for $W_{g\omega}$. Therefore, independence of the fundamental domain and $G$-equivariance of the boundary maps give
\[
I_{g\omega}=g_*I_\omega,\,\, J_{g\omega}=g_*J_\omega.
\]

To apply the Doob–Naïm formula in Subsection 4.2, we identify $J_\omega$ with the pushforward of the Naïm current on the Martin boundary as in Silverstein's work \cite{Sil74}. For $a\in\omega$, let $\widetilde B_\omega$ be the minimal Martin boundary, with hitting measures $\widetilde\nu_{\omega,x}$ and limits $\widetilde\Xi_\pm$. For a finite set $F\subset\omega$, define the finite current $\widetilde J_{\omega,F}$ and measures $\widetilde\rho_{\omega,F,x}$ on $\widetilde B_\omega$ by
\[
\widetilde J_{\omega,F}=(\widetilde\Xi_-,\widetilde\Xi_+)_*(I_\omega|_{W_{\omega,F}^*}),\,\,
\widetilde\rho_{\omega,F,x}(A)=\deg_\omega(x)\PP_x^\omega(\widetilde\Xi_+\in A,\tau_F^+=\infty).
\]
The argument in Lemma~\ref{lem:current-construction} also applies to the Martin limits and gives
\[
\widetilde J_{\omega,F}=\sum_{x\in F}\widetilde\rho_{\omega,F,x}\otimes\widetilde\nu_{\omega,x}.
\]
As $F$ increases to $\omega$, these finite currents increase to $(\widetilde\Xi_-,\widetilde\Xi_+)_*I_\omega$, which Silverstein \cite[Section~1]{Sil74} identified as
\[
(\widetilde\Xi_-,\widetilde\Xi_+)_*I_\omega=\deg_\omega(a)\Theta_\omega\,\widetilde\nu_{\omega,a}^{\otimes2},
\]
where $\Theta_\omega$ is the Na\"im kernel with reference vertex $a$. The prefactor is $\deg_\omega(a)$ because Silverstein's symmetrizing measure $\alpha$ is $\alpha(x)=\deg_\omega(x)$ for this walk.

The geometric boundary $B=\partial X$ is a measurable factor of $\widetilde B_\omega$: for every $f\in C(B)$, there is a bounded measurable function $\widetilde f$ on $\widetilde B_\omega$ such that
\[
H_\omega f(x)=\int_{\widetilde B_\omega}\widetilde f(\zeta)\,\dd\widetilde\nu_{\omega,x}(\zeta),\,\, H_\omega f(X_n)\to\widetilde f(\widetilde\Xi_+)
\]
$\PP_x^\omega$-almost surely for every $x\in\omega$ \cite[Section~1]{Sil74}. Since the martingale convergence theorem also gives $H_\omega f(X_n)\to f(\Xi_+)$, we have
\[
\widetilde f(\widetilde\Xi_+)=f(\Xi_+)
\]
$\PP_x^\omega$-almost surely. Taking a countable family of continuous functions separating the points of $B$ gives a measurable map $\pi_\omega\colon\widetilde B_\omega\to B$ such that
\[
\Xi_\pm=\pi_\omega(\widetilde\Xi_\pm),\,\, \nu_{\omega,x}=(\pi_\omega)_*\widetilde\nu_{\omega,x}
\]
with the first equality holding $\PP_x^\omega$-almost surely for every $x\in\omega$. The same applies to the reversed walk, and equivalence of the hitting measures makes it valid for all starting vertices. Pushing Silverstein's identity forward by $\pi_\omega\times\pi_\omega$ and restricting to $B^{(2)}$ gives
\[
J_\omega=\left.(\pi_\omega\times\pi_\omega)_*\bigl(\deg_\omega(a)\Theta_\omega\,\widetilde\nu_{\omega,a}^{\otimes2}\bigr)\right|_{B^{(2)}}.
\]

To prove that $J_\omega$ is almost surely Radon, we estimate its mass on pairs of boundary points joined by geodesics meeting a bounded region. For $(\xi,\eta)\in B^{(2)}$, let $\gamma_{\xi,\eta}$ be the geodesic joining $\xi$ and $\eta$. For $R\ge0$, define the compact sets
\[
C_R=\{(\xi,\eta)\in B^{(2)}\colon d_{\mathbb{H}}(o,\gamma_{\xi,\eta})\le R\},
\]
which exhaust $B^{(2)}$.

\begin{proposition}\label{prop:local-finiteness}
For $\mu_o$-almost every $\omega$, the current $J_\omega$ is Radon on $B^{(2)}$.
\end{proposition}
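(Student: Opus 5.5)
Since the sets $C_R$ are compact and exhaust $B^{(2)}$, it suffices to show $J_\omega(C_R)<\infty$ for every $R$ (integer $R$ suffices) for $\mu_o$-almost every $\omega$. It is enough to prove $\EE_{\mu_o}J_\omega(C_R)<\infty$. The plan is to bound $J_\omega(C_R)$ by an invariant sum over the vertices near the geodesic, and then to compute its expectation by the mass-transport principle (Theorem~\ref{thm:mass-transport}) and the stationarity of the bilateral walk (Proposition~\ref{prop:shift}).

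First I represent $J_\omega(C_R)$ via $I_\omega$. I choose the fundamental domain in a way adapted to the endpoints. For a bi-infinite path $w$ with endpoints $(\xi,\eta)\in C_R$, its $\theta$-orbit is counted once in $I_\omega$. I will bound the indicator of $q_\omega(w)\in W^*$ with endpoints in $C_R$ by the number of times $n$ such that $X_n$ is the vertex of the path closest to the projection point $p_{\xi,\eta}$ of $o$ onto $\gamma_{\xi,\eta}$, with ties broken by first time. Exactly one such time exists, so this count is a fundamental-domain indicator. Using $\theta$-invariance of $P_\omega=\sum_x\deg_\omega(x)\PP_x^\omega$, I get
\[
J_\omega(C_R)=\sum_{x\in\omega}\deg_\omega(x)\,\PP_x^\omega\bigl((\Xi_-,\Xi_+)\in C_R,\ X_0\text{ is the first closest point to }p_{\Xi_-,\Xi_+}\bigr).
\]

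Next I use mass transport. I define $T(o,x;\omega)$ to be the summand above, transported from $x$ to $o$. Equivalently, I set
\[
T(x,o;\omega)=\deg_\omega(o)\PP_o^\omega\bigl(\text{$X_0$ first closest to } p_{\Xi_-,\Xi_+},\ d(x,\gamma_{\Xi_-,\Xi_+})\text{-configuration}\bigr).
\]
Concretely, the transport sends mass from the root of the walk to the base point of the configuration seen from it. By equivariance, $\EE_{\mu_o}J_\omega(C_R)$ equals
\[
D_\lambda\,\EE_{\Lambda}\,\#\bigl\{gK\in\omega:\ d_{\mathbb H}(go,\gamma_{\Xi_-,\Xi_+})\le R,\ o\text{ is the closest walk point to the projection of }go\bigr\}.
\]
The event that $o=X_0$ is the closest point forces $go$ to lie within the distance from $X_0$ to the geodesic plus $R$ in the ball around $X_0$. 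So the count is at most $\#(\omega\cap B(o,R+r))$, where $r=d_{\mathbb H}(o,\gamma_{\Xi_-,\Xi_+})$ under $\Lambda$.

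The main obstacle is controlling $\EE_\Lambda\,\#(\omega\cap B(o,R+r))$, which grows like $e^{(d-1)r}$. I therefore need exponential tails for $r$ with a rate beating the volume growth, combined with Poisson concentration. I will first try a bound using the fact that $r$ is at most the distance from $o$ to the closest point of the two-sided walk path to the geodesic. Positive speed together with Kaimanovich's sublinear tracking gives $r<\infty$ almost surely. For integrability I would instead bound $r$ by a variable controlled via the BKP anchored-expansion heat kernel estimates, which give exponential decay of $\PP_o^\omega(d(o,\gamma)>t)$. If that is too weak, I would bound $J_\omega(C_R)$ directly, without expectation, for almost every $\omega$. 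By Lemma~\ref{lem:current-construction} with $F=\omega\cap B(o,R')$, one has $J_{\omega,F}$ finite. One then shows that, almost surely, walks whose geodesic meets $B(o,R)$ must visit $B(o,R')$ for some random finite $R'$, up to a set of finite $I_\omega$-mass, using the tracking of geodesics by the walk. This fallback is likely the cleanest route.
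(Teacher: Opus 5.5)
Your main route has two fatal problems, and the fallback does not supply the missing argument. First, the counting bound is wrong. If $X_0=o$ is the first walk vertex closest to the projection $p_x$ of $x$ onto $\gamma=\gamma_{\Xi_-,\Xi_+}$, you only get $d_{\mathbb H}(o,x)\le d_{\mathbb H}(o,p_x)+R$. Here $d_{\mathbb H}(o,p_x)\ge r$, but nothing bounds it above in terms of $r=d_{\mathbb H}(o,\gamma)$. The stretch of $\gamma$ on which $o$ is the nearest walk vertex is governed by the other walk positions, not by $r$. For instance, long steps to $X_{\pm1}$ landing near $\gamma$ far on either side of the projection of $o$ leave $o$ nearest on a segment of length comparable to those steps.

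Second, even granting a bound by $\#(\omega\cap B(o,R+r))$, the tail you say you need does not exist. For any probability measure $\nu$ on $\mathbb S^{d-1}$, partition the sphere into $\asymp\varepsilon^{-(d-1)}$ pieces of diameter $<\varepsilon$; Cauchy--Schwarz then gives $\nu^{\otimes2}(\{|\xi-\eta|<\varepsilon\})\ge c\,\varepsilon^{d-1}$. Meanwhile $\cosh d_{\mathbb H}(o,\gamma_{\xi,\eta})=2/|\xi-\eta|$. Since $(\Xi_-,\Xi_+)$ has conditional law $\nu_{\omega,o}^{\otimes2}$ under $\Lambda$, this yields $\Lambda(r>t)\ge c\,e^{-(d-1)t}$. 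So no exponential tail beating the volume growth can hold, whatever heat-kernel input you use, and a ball count of radius $R+r$ around $o$ is hopeless.

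The fallback merely restates the claim. Sublinear tracking is an asymptotic statement about one walk from a fixed vertex and gives no uniform $R'$. The trajectories in $I_\omega$ that stay far from $o$ while their endpoint geodesic passes near $o$ are exactly the infinite family whose total mass must be shown finite.

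The idea you are missing is to measure the geodesic by arclength rather than by counting configuration points near it. Mass should then be transported along projections of every walk step, not from a single nearest vertex. The paper proceeds as follows.
\begin{enumerate}
\item It bounds $2J_\omega(C_R)\le\int\ell_{\mathbb H}(\gamma_{\xi,\eta}\cap\overline B(o,R+1))\,\dd J_\omega$.
\item It covers $\gamma_w$ by the projected steps $I_{w,n}=[\pi_w(w_n),\pi_w(w_{n+1})]$, each of length at most $d_{\mathbb H}(w_n,w_{n+1})$; overlaps are harmless for an upper bound.
\item It lets each $w_n$ send arclength measure on $I_{w,n}$. Every directed edge carries $I_\omega$-mass exactly one, so the first marginal of the resulting diagonally invariant measure on $X\times X$ is at most $\lambda\ell_1\vol$.
\item The mass-transport principle moves this bound to the second marginal, which dominates $\EE_\mu\int\ell_{\mathbb H}(\gamma_{\xi,\eta}\cap U)\,\dd J_\omega$.
\end{enumerate}
No control of the distance between the walk and its geodesic is needed. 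A vertex sends mass equal to its step length however far from $\gamma_w$ it sits, so only the first moment $\ell_1$ from Lemma~\ref{lem:moments} enters.
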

\begin{proof}
Let $\ell_{\mathbb{H}}$ denote hyperbolic arclength. Every geodesic meeting $\overline B(o,R)$ contains a segment of length two in $\overline B(o,R+1)$. Therefore,
\[
2J_\omega(C_R)\le\int_{B^{(2)}}\ell_{\mathbb H}(\gamma_{\xi,\eta}\cap\overline B(o,R+1))\,\dd J_\omega(\xi,\eta).
\]
Thus it is enough to show that
\[
\EE_\mu\int_{B^{(2)}}\ell_{\mathbb H}(\gamma_{\xi,\eta}\cap U)\,\dd J_\omega(\xi,\eta)\le\lambda\ell_1\vol(U)
\]
for every ball $U\subset X$, where $\ell_1$ is defined in Lemma~\ref{lem:moments}.

For a trajectory $w=(w_n)_{n\in\ZZ}$ with distinct boundary limits, let $\gamma_w$ be the geodesic determined by these limits and let $\pi_w$ be nearest-point projection onto $\gamma_w$. For $n \in \ZZ$ we define the geodesic segments
\[
I_{w,n}=[\pi_w(w_n),\pi_w(w_{n+1})],
\]
which satisfy
\[
\ell_{\mathbb H}(I_{w,n})\le d_{\mathbb{H}}(w_n,w_{n+1}),\,\,
\ell_{\mathbb H}(\gamma_w\cap U)\le\sum_{n\in\ZZ}\ell_{\mathbb H}(I_{w,n}\cap U),
\]
since the projected points tend to opposite ends of $\gamma_w$.

For every edge $x\sim_\omega y$, the definition of $I_\omega$ gives
\[
\int_{W_\omega^*}\sum_{n\in\ZZ}\one_{\{w_n=x,\ w_{n+1}=y\}}\,\dd I_\omega(w^*)
=P_\omega(X_0=x,X_1=y)=1.
\]
Let $W_\omega^{*,(2)}$ be the set of trajectories modulo time shift with distinct boundary limits. We also define a measure on $X\times X$ by
\[
M=\EE_\mu\int_{W_\omega^{*,(2)}}\sum_{n\in\ZZ}\delta_{w_n}\otimes\bigl(\ell_{\mathbb H}|_{I_{w,n}}\bigr)\,\dd I_\omega(w^*),
\]
where $w$ is any representative of $w^*$, and the sum is independent of this choice. By the construction, it is clear that this measure $M$ is invariant under the diagonal $G$-action. Further, Campbell's identity gives
\begin{align*}
M(U\times X)
&=\EE_\mu\int_{W_\omega^{*,(2)}}\sum_{n\in\ZZ}\one_U(w_n)\ell_{\mathbb H}(I_{w,n})\,\dd I_\omega(w^*)\\
&\le\EE_\mu\int_{W_\omega^*}\sum_{n\in\ZZ}\one_U(w_n)d_{\mathbb{H}}(w_n,w_{n+1})\,\dd I_\omega(w^*)\\
&=\EE_\mu\sum_{x\in\omega\cap U}\sum_{y\sim_\omega x}d_{\mathbb{H}}(x,y)
=\lambda\ell_1\vol(U)<\infty,
\end{align*}
and in particular $M$ is $\sigma$-finite. Then we can apply the mass-transport principle to $\varphi=\one_U$ and obtain
\begin{align*}
\EE_\mu\int_{B^{(2)}}\ell_{\mathbb H}(\gamma_{\xi,\eta}\cap U)\,\dd J_\omega(\xi,\eta)
&\le\EE_\mu\int_{W_\omega^{*,(2)}}\sum_{n\in\ZZ}\ell_{\mathbb H}(I_{w,n}\cap U)\,\dd I_\omega(w^*)\\
&=M(X\times U)=M(U\times X)\\
&\le\lambda\ell_1\vol(U),
\end{align*}
which completes the proof.
\end{proof}

\subsection{Rigidity of the Na\"im current}\label{subsec:current-density}
We now establish rigidity of the Na\"im current under the assumption of absolute continuity.

\begin{proposition}\label{prop:rigidity}
Suppose that $\nu_{\omega,o}\ll m$ for $\mu_o$-almost every $\omega$. Then there are a deterministic constant $c_{d,\lambda}\in(0,\infty)$ and a measurable family of Borel sets $A_\omega\subset B^{(2)}$ such that
\[
J_\omega=c_{d,\lambda}L|_{A_\omega}
\]
for $\mu_o$-almost every $\omega$.
\end{proposition}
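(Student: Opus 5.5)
The plan is to write $J_\omega$ as a density against the Liouville current, show that this density is an invariant function for the boundary extension $\widehat\cR$, and then use the ergodicity of Corollary~\ref{cor:double-ergodicity} to force it to be constant on its support.

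\textbf{Step 1: the density.} Under the hypothesis, Lemma~\ref{lem:current-construction} gives $J_\omega\sim\nu_{\omega,o}^{\otimes2}\ll m^{\otimes2}$ on $B^{(2)}$. Since $L\sim m^{\otimes2}|_{B^{(2)}}$, we may define $h_\omega=\dd J_\omega/\dd L$, a Borel function on $B^{(2)}$. By Proposition~\ref{prop:local-finiteness} it is locally integrable for $\mu_o$-almost every $\omega$. Joint measurability of $(\omega,\xi,\eta)\mapsto h_\omega(\xi,\eta)$ will come from a standard measurable Radon--Nikodym construction, for instance as a limit of ratios over a countable generating family of product sets.

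\textbf{Step 2: equivariance and invariance.} Since $J_{g\omega}=g_*J_\omega$ and $L$ is $G$-invariant, we get $h_{g\omega}(g\xi,g\eta)=h_\omega(\xi,\eta)$ almost everywhere. Two consequences follow.
\begin{itemize}
\item Invariance of $J_\omega$ and $L$ under $K$ shows that $h$ descends to a function on $\mathcal B$.
\item For $gK\in\omega$, the rerooted triple $(g^{-1}\omega,g^{-1}\xi,g^{-1}\eta)$ carries the same value, $h_{g^{-1}\omega}(g^{-1}\xi,g^{-1}\eta)=h_\omega(\xi,\eta)$.
\end{itemize}
Thus $h$ is $\widehat\cR$-invariant off a null set. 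To make this rigorous, we take the exceptional set $N$ where the identity fails for some of the countably many rerootings. It is $Q$-null, and since $\widehat\cR$ is nonsingular its saturation is also $Q$-null. We then redefine $h$ on that saturation. The delicate point is that $J_{g^{-1}\omega}$ is the Na\"im current of the same graph viewed from a different root. Here we use that $J_\omega$ itself does not depend on a base vertex, whereas the Martin-boundary normalization does. This is why the density should be taken against the base-point-free measure $L$ rather than against $\nu_{\omega,o}^{\otimes2}$.

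\textbf{Step 3: ergodicity.} By Corollary~\ref{cor:double-ergodicity}, an $\widehat\cR$-invariant measurable function is $Q$-almost surely constant. Since $Q\sim J_\omega$-fibres on $B^{(2)}$ (note that $\nu^{\otimes2}$ charges the diagonal nothing, as $\nu$ is nonatomic), we get $h_\omega=c$ for $J_\omega$-almost every point and $\mu_o$-almost every $\omega$. Because $h>0$ $J_\omega$-a.e., we have $c\in(0,\infty)$. On the $L$-null complement of the support nothing is claimed.

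\textbf{Step 4: defining $A_\omega$.} Set $A_\omega=\{h_\omega>0\}$, adjusted to $\{h_\omega=c\}$ by modifying on null sets. Then $J_\omega=c\,L|_{A_\omega}$, with $A_\omega$ a measurable family. The constant $c$ is deterministic, since it is the essential value of an invariant function under an ergodic relation, and it depends only on $d$ and $\lambda$.

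The main obstacle is Step 2. One must check that the family of densities is jointly measurable and genuinely invariant on a saturated conull set, rather than merely almost everywhere for each fixed rerooting. One must also check that $Q$-almost sure constancy transfers to $J_\omega$-almost everywhere constancy. That transfer requires $J_\omega\sim\nu_{\omega,o}^{\otimes2}$ on $B^{(2)}$ simultaneously for almost all $\omega$, which Lemma~\ref{lem:current-construction} supplies.
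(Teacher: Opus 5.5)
Your proposal is correct and takes essentially the same route as the paper. The paper takes $\Phi=\dd\mathbf J/\dd\mathbf L$ with $\mathbf J=\int\delta_\omega\otimes J_\omega\,\dd\mu_o(\omega)$ and $\mathbf L=\mu_o\otimes L$ directly on the quotient bundle, so joint measurability and descent to $\mathcal B$ come from disintegration rather than a fibrewise differentiation argument; it then checks $\widehat\cR$-invariance via $J_{g\omega}=g_*J_\omega$ and $G$-invariance of $L$, and concludes by double ergodicity with $A_\omega=\{\Phi(\omega,\cdot,\cdot)>0\}$. There are two harmless slips: $J_\omega$ is $K$-equivariant rather than $K$-invariant, and the complement of $\{h_\omega>0\}$ is $J_\omega$-null, not $L$-null (it is exactly where $A_\omega$ may omit $L$-mass).
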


\begin{proof}
Absolute continuity implies that $Q$ gives no mass to the diagonal, and hence we can restrict the boundary bundle to $K\backslash(M_o(X)\times B^{(2)})$, retaining the notation $\mathcal B$ and $Q$. On this bundle, we define
\begin{itemize}
    \item $\displaystyle \mathbf J=\int_{\Omega_o}\delta_\omega\otimes J_\omega\,\dd\mu_o(\omega)$,
    \item $\displaystyle \mathbf L=\mu_o\otimes L$.
\end{itemize}
Proposition~\ref{prop:local-finiteness} shows that both measures are $\sigma$-finite. By Lemma~\ref{lem:current-construction}, we have $\mathbf J\ll\mathbf L$ and $\mathbf J\sim Q$. Let $\Phi=\dd\mathbf J/\dd\mathbf L$ be the Radon--Nikodym derivative, and use the same notation for its lift to $M_o(X)\times B^{(2)}$. Disintegration gives
\[
\Phi(\omega,\xi,\eta)=\frac{\dd J_\omega}{\dd L}(\xi,\eta)
\]
for $\mu_o$-almost every $\omega$ and $L$-almost every $(\xi,\eta)\in B^{(2)}$. Since $\mathbf J\sim Q$, we have $0<\Phi<\infty$, $Q$-a.e.

We show that $\Phi$ is $\widehat\cR$-invariant. Let $\phi$ be a partial isomorphism of $\cR$, with $\phi(\omega)=g^{-1}\omega$, and let $\widehat\phi$ be its lift as in the proof of Corollary~\ref{cor:double-ergodicity}. Since $\phi$ preserves $\mu_o$ and $L$ is $G$-invariant, $\widehat\phi$ preserves $\mathbf L$. Combining the $G$-equivariance of $J_\omega$ and the $G$-invariance of $L$, we obtain
\begin{align*}
\Phi(g^{-1}\omega,g^{-1}\xi,g^{-1}\eta)
&=\frac{\dd J_{g^{-1}\omega}}{\dd L}(g^{-1}\xi,g^{-1}\eta)\\
&=\frac{\dd (g^{-1})_*J_\omega}{\dd (g^{-1})_*L}(g^{-1}\xi,g^{-1}\eta)\\
&=\frac{\dd J_\omega}{\dd L}(\xi,\eta)
=\Phi(\omega,\xi,\eta)
\end{align*}
for $\mathbf L$-almost every $(\omega,\xi,\eta)$ in the domain of $\widehat\phi$. Since the lifts generate $\widehat\cR$, and $Q\ll\mathbf L$, we obtain that $\Phi$ is $\widehat\cR$-invariant $Q$-a.e. Therefore, Corollary~\ref{cor:double-ergodicity} gives a constant $c_{d,\lambda}\in(0,\infty)$ such that $\Phi=c_{d,\lambda}$, $Q$-a.e.

Set $A=\{\Phi>0\}$ and $A_\omega=\{(\xi,\eta)\in B^{(2)}:\Phi(\omega,\xi,\eta)>0\}$. Since $\mathbf J=\Phi\mathbf L$ and $\mathbf J\sim Q$, we have $\mathbf L|_A\sim Q$. Hence $\Phi=c_{d,\lambda}\one_A$, $\mathbf L$-a.e., which implies
\[
J_\omega=\Phi(\omega,\cdot,\cdot)L=c_{d,\lambda}L|_{A_\omega}
\]
for $\mu_o$-almost every $\omega$.
\end{proof}

\section{Proof of the main theorem}\label{sec:proof}
By Lemma~\ref{lem:pure-type}, it suffices to derive a contradiction in the absolutely continuous case. We first construct finite insertions that will be incompatible with the rigidity in Proposition~\ref{prop:rigidity}. In the planar case, harmonic extensions of smooth boundary functions have finite energy, and we use the Doob--Na\"im formula to compare their energies under finite insertions. In general dimensions, these energies may be infinite, so we use traces of Dirichlet forms on finite sets instead.

\subsection{Finite perturbations}\label{subsec:insertion}
First, we observe that finite perturbations preserve the measure class of harmonic measure: let $\omega,\omega'\in M(X)$ satisfy $\#(\omega\mathbin\triangle\omega')<\infty$. The Voronoi tessellations of $\omega$ and $\omega'$ agree outside
\[
\bigcup_{x\in\omega\setminus\omega'}C_x(\omega)\ \cup\ \bigcup_{x\in\omega'\setminus\omega}C_x(\omega').
\]
This set is compact and meets only finitely many cells of either tessellation. Hence $\#(E_\omega\mathbin\triangle E_{\omega'})<\infty$. Let $S\subset\omega\cup\omega'$ be finite and contain every changed vertex and every endpoint of a changed edge. Then $\omega\setminus S=\omega'\setminus S$, and the neighbors and transition probabilities of every vertex in this common exterior agree. Thus, for every $z\in\omega\setminus S$, we have
\[
\PP_z^\omega(\cdot,\tau_S=\infty)=\PP_z^{\omega'}(\cdot,\tau_S=\infty).
\]

Let $A$ be an event of one-sided paths unchanged by deleting a finite initial segment, then by transience and the Markov property, for every $x\in\omega$ we have
\[
\PP_x^\omega(A)=0
\,\,\Longleftrightarrow\,\,
\PP_z^\omega(A,\tau_S=\infty)=0\text{ for every }z\in\omega\setminus S.
\]
Applying this in both configurations to $A=\{\Xi_+\in C\}$, for Borel $C\subset B$, gives
\[
\nu_{\omega,x}\sim\nu_{\omega',y}
\]
for every $x\in\omega$ and $y\in\omega'$. Combining this observation with Proposition~\ref{prop:rigidity} gives the following lemma.

\begin{lemma}\label{lem:finite-rigidity}
Suppose that $\nu_{\omega,o}\ll m$ for $\mu_o$-almost every $\omega$. Then there exists a $K$-invariant, $\cR$-invariant $\mu_o$-conull Borel set $A\subset M_o(X)$ such that
\[
J_\omega=J_{\omega'}
\]
whenever $\omega,\omega'\in A$ and $\#(\omega\mathbin\triangle\omega')<\infty$.
\end{lemma}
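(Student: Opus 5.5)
We will combine the rigidity of Proposition~\ref{prop:rigidity} with the observation above that finite perturbations preserve the measure class of harmonic measure. By Proposition~\ref{prop:rigidity}, there is a $\mu_o$-conull Borel set $A_0\subset M_o(X)$ on which $J_\omega=c_{d,\lambda}L|_{A_\omega}$. We would like to make $A_0$ both $K$-invariant and $\cR$-invariant. For $K$-invariance, note that $\mu_o$ is $K$-invariant and the identity is $K$-equivariant: $J_{k\omega}=k_*J_\omega$, $L$ is $K$-invariant, and $A_{k\omega}$ can be taken to be $kA_\omega$. Since the quantity $\Phi$ in that proof is defined on the quotient by $K$, we may take $A_0$ to be a lift of a set in $\Omega_o$. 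For $\cR$-invariance, we remove the $\cR$-saturation of the complement, which is null because $\cR$ is countable and measure-preserving (Proposition~\ref{prop:rerooting}). The resulting set $A$ is still $K$-invariant, since saturations commute with the $K$-action in the quotient.

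Next, fix $\omega,\omega'\in A$ with $\#(\omega\mathbin\triangle\omega')<\infty$. The key point is that the support set $A_\omega$ is, up to $L$-null sets, determined by the measure class of $J_\omega$. By Lemma~\ref{lem:current-construction}, $J_\omega\sim\nu_{\omega,o}^{\otimes2}|_{B^{(2)}}$. By the finite-perturbation observation, $\nu_{\omega,o}\sim\nu_{\omega',o}$, because $o$ lies in both configurations and all harmonic measures of $\omega$ and $\omega'$ are mutually equivalent. Hence $J_\omega\sim J_{\omega'}$. Since $J_\omega=c L|_{A_\omega}$ and $J_{\omega'}=cL|_{A_{\omega'}}$ with the same deterministic constant $c=c_{d,\lambda}$, equivalence forces $L(A_\omega\mathbin\triangle A_{\omega'})=0$. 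Therefore $J_\omega=J_{\omega'}$.

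The only subtle step is ensuring that the rigidity identity holds for \emph{every} element of the conull set, rather than merely almost surely. The statement requires it for all pairs $\omega,\omega'\in A$, and the set of finite perturbations of a given $\omega$ is itself a $\mu_o$-null set. So we cannot afford an exceptional set depending on the pair; we must fix one set $A$ on which the pointwise identity $J_\omega=c_{d,\lambda}L|_{A_\omega}$ holds. This is possible because Proposition~\ref{prop:rigidity} gives the identity on a conull set of individual $\omega$, and the finite-perturbation equivalence $\nu_{\omega,o}\sim\nu_{\omega',o}$ holds deterministically, for all configurations in the standing conull set. That set has compact cells and connected, locally finite graphs, and transience holds for every configuration in it after we intersect with the transience set. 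The intersection step is harmless: transience is needed for the finite-perturbation argument, and it is itself stable under finite perturbation.
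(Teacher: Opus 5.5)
Your proof is correct and follows the paper's argument. You take the complement of the $\cR$-saturation of the exceptional set from Proposition~\ref{prop:rigidity}, use the finite-perturbation observation with Lemma~\ref{lem:current-construction} to get $J_\omega\sim J_{\omega'}$, and conclude from $J_\omega=c_{d,\lambda}L|_{A_\omega}$ (with a common deterministic constant) that $L(A_\omega\mathbin\triangle A_{\omega'})=0$; your extra remarks on $K$-invariance and on needing one fixed set $A$ rather than a pair-dependent exceptional set simply make explicit what the paper leaves implicit.
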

\begin{proof}
Let $A$ be the lift of the complement of the $\cR$-saturation of the exceptional set in Proposition~\ref{prop:rigidity}, and let $\omega,\omega'\in A$ satisfy $\#(\omega\mathbin\triangle\omega')<\infty$. Since finite perturbations preserve the measure class of harmonic measure, Lemma~\ref{lem:current-construction} gives $J_\omega\sim J_{\omega'}$. Proposition~\ref{prop:rigidity} then gives $L(A_\omega\mathbin\triangle A_{\omega'})=0$ and $J_\omega=J_{\omega'}$.
\end{proof}

Before constructing the required insertion, we record the \emph{insertion tolerance} of the Poisson point process, see \cite{HS13} for the general notion.

\begin{lemma}\label{lem:insertion-conull}
Let $A\subset M_o(X)$ be Borel and $K$-invariant, with $\mu_o(A)=1$. Then there exists a $K$-invariant, $\cR$-invariant $\mu_o$-conull Borel set $A_0\subset M_o(X)$ such that, for every $\omega\in A_0$ and every $n\ge1$, we have
\[
\omega\cup\{z_1,\ldots,z_n\}\in A
\]
for $\vol^{\otimes n}$-almost every $(z_1,\ldots,z_n)\in X^n$.
\end{lemma}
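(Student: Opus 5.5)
The plan is to use the multivariate Mecke identity for the Poisson process and then saturate the resulting good set. Fix $n\ge1$ and define the Borel set
\[
A_n'=\bigl\{\omega\in M_o(X)\colon \omega\cup\{z_1,\ldots,z_n\}\in A\text{ for }\vol^{\otimes n}\text{-a.e. }(z_1,\ldots,z_n)\bigr\}.
\]
It is Borel because $(\omega,z)\mapsto\one_A(\omega\cup\{z_1,\dots,z_n\})$ is jointly Borel and Tonelli applies on bounded regions. It is $K$-invariant since $A$ and $\vol$ are $K$-invariant. I will first show that $\mu_o(A_n')=1$. Then I will set $A_0$ to be the lift of the complement of the $\cR$-saturation of $\bigcup_n(\Omega_o\setminus A_n')$. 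This set is conull, and it is $K$- and $\cR$-invariant by construction.

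To get $\mu_o(A_n')=1$, I use Remark~\ref{rem:slivnyak}, which writes $\mu_o$ as the law of $\Pi_\lambda\cup\{o\}$. Fix a ball $U$ and apply the multivariate Mecke identity to $F(\eta,z)=\one_{A^c}(\eta\cup\{o\})$:
\[
\EE_\mu\sum_{(z_1,\dots,z_n)\in(\omega\cap U)^n_{\ne}}\one_{A^c}(\omega\cup\{o\})
=\lambda^n\int_{U^n}\EE_\mu\one_{A^c}(\Pi_\lambda\cup\{o,z_1,\dots,z_n\})\,\dd z.
\]
Here I read $\omega\cup\{o\}$ on the left as the configuration with the $z_i$ already included. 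The left side vanishes because $\mu_o(A^c)=0$. Hence, for a.e. $z\in U^n$, the configuration $\Pi_\lambda\cup\{o,z_1,\dots,z_n\}$ lies in $A$ almost surely. Fubini then gives that for $\mu$-a.e. $\eta$, and so for $\mu_o$-a.e. $\omega=\eta\cup\{o\}$, we have $\omega\cup\{z\}\in A$ for a.e. $z\in U^n$. Exhausting $X$ by countably many balls gives the claim. Points $z_i$ coinciding with points of $\omega$ or with each other form a null set and can be ignored.

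The next step is to check that $\cR$-invariance does not break the insertion property. Passing to the complement of the saturation of a null set is harmless, since saturations of null sets are null (measure preservation, Proposition~\ref{prop:rerooting}). The remaining subtlety is that the property defining $A_n'$ is not itself $\cR$-invariant. The fix is to remove the saturation of its failure set, which is exactly how $A_0$ is defined above. So every $\omega\in A_0$ satisfies the insertion property, even though rerootings of $\omega$ are only guaranteed to lie in $A_0$, not to be arbitrary.

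The main obstacle I expect is measurability bookkeeping: making $A_n'$ Borel, and justifying the Mecke step with the extra point $o$. I would handle the latter by working directly with Remark~\ref{rem:slivnyak}, together with the fact that adding $o$ commutes with adding the $z_i$.
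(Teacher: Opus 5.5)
Your proposal is correct and follows the paper's proof essentially step for step. Both arguments combine the multivariate Mecke identity with the description $\mu_o=\operatorname{Law}(\Pi_\lambda\cup\{o\})$ to get $\int_{U^n}\one_{A^c}(\omega\cup\{z_1,\ldots,z_n\})\,\dd\vol^{\otimes n}(z)=0$ for $\mu_o$-a.e.\ $\omega$, intersect over $n$ and balls $B(o,k)$, and then remove the $\cR$-saturation of the exceptional null set. Your additional checks of Borel measurability and $K$-invariance of $A_n'$ fill in details the paper leaves implicit.
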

\begin{proof}
For $n\ge1$ and a bounded Borel set $U\subset X$, the multivariate Mecke identity \cite[Theorem~4.4]{LP18} gives
\[
\int_{M_o(X)}\int_{U^n}\one_{A^c}(\omega\cup\{z_1,\ldots,z_n\})\,\dd\vol^{\otimes n}(z)\,\dd\mu_o(\omega)=0,
\]
since $\mu_o(A)=1$. By Fubini's theorem, for each $n$ and each bounded $U$ the inner integral vanishes for $\mu_o$-almost every $\omega$. Intersecting these conull sets over $n\ge1$ and $U=B(o,k)$, $k\ge1$, gives a $K$-invariant $\mu_o$-conull Borel set. The lift of the complement of the $\cR$-saturation of the exceptional set in $\Omega_o$ is the required $A_0$.
\end{proof}

For functions $u,v\colon\omega\to\RR$, the energy form on $\mathcal G_\omega$ is given by
\[
\cE_\omega(u,v)=\sum_{\{x,y\}\in E_\omega}
(u(x)-u(y))(v(x)-v(y)),
\]
whenever the sum is absolutely convergent. We write $\cE_\omega(u)=\cE_\omega(u,u)$ for the Dirichlet energy, allowing the value $+\infty$. For a finite insertion $\omega'=\omega\cup Z$ with $E_\omega\mathbin\triangle E_{\omega'}$ finite and a function $u'\colon\omega'\to\RR$, let
\[
\delta\cE_{\omega,\omega'}(u')
=
\sum_{\{a,b\}\in E_{\omega'}\setminus E_\omega}(u'(a)-u'(b))^2
-
\sum_{\{a,b\}\in E_\omega\setminus E_{\omega'}}(u'(a)-u'(b))^2,
\]
which records the change in Dirichlet energy under the insertion.

The following lemma constructs the finite insertions needed for the energy comparison. Since only the conclusion is needed below, the proof may be skipped safely.

\begin{lemma}\label{lem:geometric-insertion}
Let $\omega\subset X$ be locally finite, and let $x\in\omega$ have compact Voronoi cell. For every $r>0$, there exist $n\ge1$ and a nonempty open set $\mathcal U\subset B(x,r)^n$ such that every $(z_1,\ldots,z_n)\in\mathcal U$ consists of distinct points in $X\setminus\omega$ and the configuration $\omega'=\omega\cup\{z_1,\ldots,z_n\}$ satisfies $\#(E_\omega\mathbin\triangle E_{\omega'})<\infty$ and
\[
\delta\cE_{\omega,\omega'}(u')\ge0
\]
for every $u'\colon\omega'\to\RR$. Further, if $\delta\cE_{\omega,\omega'}(u')=0$, then $u'(x)=u'(y)$ for every $y\sim_\omega x$.
\end{lemma}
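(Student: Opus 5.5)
The plan is to insert a large, finely spread cloud of points very close to $x$. The Voronoi cell of $x$ then becomes tiny, every old edge at $x$ is destroyed, and each destroyed edge $\{x,y\}$ is replaced by at least three new two-step paths $x\sim z\sim y$ with disjoint middle vertices. The energy inequality then follows from the elementary bound $(a-c)^2\le 2\bigl((a-b)^2+(b-c)^2\bigr)$.

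\textbf{Geometric step.} Fix a small $\varepsilon\in(0,r)$ and a finite set of directions $\theta_1,\dots,\theta_n$ in the unit tangent sphere at $x$. Put $z_i=\exp_x(\varepsilon\theta_i)$. Since $C_x(\omega)$ is compact, $x$ has finitely many neighbors $y_1,\dots,y_k$. The Voronoi diagram of $\omega'=\omega\cup\{z_i\}$ differs from that of $\omega$ only inside a neighborhood of $C_x(\omega)$ that shrinks to $C_x(\omega)$ as $\varepsilon\to0$, because a point closer to some $z_i$ than to every point of $\omega$ lies within $O(\varepsilon)$ of $C_x(\omega)$. This gives $\#(E_\omega\mathbin\triangle E_{\omega'})<\infty$.

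Rescaling by $\varepsilon$ and letting $\varepsilon\to0$, I will check the following, provided the $\theta_i$ form a sufficiently dense net:
\begin{enumerate}
\item $C_x(\omega')$ is a small polytope around $x$ whose neighbors are exactly some of the $z_i$, so every edge $\{x,y_j\}$ is removed.
\item The cell of $z_i$ approximates the part of $C_x(\omega)$ in the cone of directions near $\theta_i$. Hence $z_i\sim_{\omega'}y_j$ whenever $\theta_i$ points into the open cone over the interior of the facet $C_x(\omega)\cap C_{y_j}(\omega)$, and $z_i\sim_{\omega'}x$.
\item No edge $\{y,y'\}$ between old vertices disappears. A facet between two old cells meets $C_x(\omega)$ only in lower-dimensional faces, so a relatively open part of it at positive distance from $C_x(\omega)$ survives for small $\varepsilon$.
\end{enumerate}
The cones over distinct facets are disjoint open sets of directions. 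So for each $j$ I can pick three indices $i_{j,1},i_{j,2},i_{j,3}$, with all these indices distinct over $j$, such that $z_{i_{j,s}}$ is adjacent to both $x$ and $y_j$. All these adjacency and non-adjacency conditions are finitely many generic, hence open, conditions on $(z_1,\dots,z_n)$. Taking them together with distinctness and avoidance of $\omega$ gives the nonempty open set $\mathcal U\subset B(x,r)^n$.

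\textbf{Energy step.} Now $E_\omega\setminus E_{\omega'}=\{\{x,y_j\}\}_j$. For each $j$ and $s$, with $z=z_{i_{j,s}}$,
\[
(u'(x)-u'(z))^2+(u'(z)-u'(y_j))^2\ge\tfrac12(u'(x)-u'(y_j))^2 .
\]
The edges used for different pairs $(j,s)$ are distinct, and all other new edges contribute nonnegatively. Therefore
\[
\delta\cE_{\omega,\omega'}(u')\ge\sum_j\Bigl(\tfrac32-1\Bigr)(u'(x)-u'(y_j))^2\ge0 .
\]
If $\delta\cE_{\omega,\omega'}(u')=0$, every term of the last sum vanishes, so $u'(x)=u'(y_j)$ for all $j$.

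\textbf{Main obstacle.} The only real difficulty is the rigorous Voronoi geometry in the geometric step: that $x$ is completely enclosed by the new points, that each old facet of $C_x(\omega)$ is adjacent to an open set of new points, and that no old edge other than those at $x$ is lost. I would handle this by the blow-up argument at scale $\varepsilon$. In normal coordinates at $x$, the hyperbolic bisectors between $x,z_i,y_j$ converge after rescaling to Euclidean bisectors. The limiting configuration is a Euclidean cone picture whose combinatorics is stable under small perturbations.
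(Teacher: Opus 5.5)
Your proposal is correct and is essentially the paper's argument: points at distance $\varepsilon$ from $x$ on rays aimed at the relative interiors of the facets $C_x(\omega)\cap C_{y_j}(\omega)$, three per neighbor, the adjacencies $x\sim_{\omega'}z\sim_{\omega'}y_j$, preservation of the old edges not at $x$, the same bound $\delta\cE_{\omega,\omega'}(u')\ge\bigl(\tfrac32-1\bigr)\sum_j(u'(x)-u'(y_j))^2$, and openness from stability of strict inequalities.

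Two simplifications are worth noting. First, the dense net and item (1) are unnecessary: once the other old edges survive, you may simply subtract all old edges at $x$, and the paper uses exactly $n=3k$ points. Second, the adjacency $z\sim_{\omega'}y_j$ occurs at unit distance from $x$, near the facet, so a blow-up at scale $\varepsilon$ does not see it. It is best checked directly along the ray through a facet point $q$, where $d_{\mathbb H}(q,z)=d_{\mathbb H}(q,y_j)-\varepsilon$, $d_{\mathbb H}(q^+,z)>d_{\mathbb H}(q^+,y_j)$ for a point $q^+$ slightly beyond $q$, and the triangle inequality makes $z$ strictly the nearest inserted point on that ray.
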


\begin{proof}
Let $v_1,\ldots,v_k$ be the neighbors of $x$. We will construct distinct inserted points $z_{i,j}$ such that
\[
x\sim_{\omega'}z_{i,j}\sim_{\omega'}v_i
\]
for $1\le i\le k,\,\, 1\le j\le3$, and preserve every old edge not incident to $x$. Given these conditions, we have
\begin{align*}
\delta\cE_{\omega,\omega'}(u')
&\ge\sum_{i=1}^k\sum_{j=1}^3\bigl[(u'(x)-u'(z_{i,j}))^2+(u'(z_{i,j})-u'(v_i))^2\bigr]-\sum_{i=1}^k(u'(x)-u'(v_i))^2\\
&\ge\frac12\sum_{i=1}^k(u'(x)-u'(v_i))^2,
\end{align*}
which would complete the proof.

For each $i$, choose three distinct points $q_{i,1},q_{i,2},q_{i,3}$ in the relative interior of the facet $C_x(\omega)\cap C_{v_i}(\omega)$. Let $z_{i,j}\in[x,q_{i,j}]$ satisfy $d_{\mathbb H}(x,z_{i,j})=\varepsilon$, where the common $\varepsilon\in(0,r)$ will be chosen sufficiently small. The rays from $x$ through the points $q_{i,j}$ are distinct, since each ray meets the boundary of the convex cell $C_x(\omega)$ only once.

Fix $(i,j)$ and write $q=q_{i,j}$, $z=z_{i,j}$. At the midpoint $p$ of $[x,z]$, the triangle inequality and distinctness of the rays give
\[
d_{\mathbb H}(p,x)=d_{\mathbb H}(p,z)=\varepsilon/2<d_{\mathbb H}(p,z_{\ell,m})
\]
for $(\ell,m)\ne(i,j)$.
For small $\varepsilon$, all old sites other than $x$ are farther from $p$, so $x\sim_{\omega'}z$.

Choose $q^+$ on the same ray, slightly beyond $q$, so that
\[
d_{\mathbb H}(q^+,v_i)<d_{\mathbb H}(q^+,x),
\]
and
\[
d_{\mathbb H}(t,y)>\max\{d_{\mathbb H}(t,x),d_{\mathbb H}(t,v_i)\}
\]
for every $t\in[q,q^+]$ and $y\in\omega\setminus\{x,v_i\}$. Again by the triangle inequality and distinctness of the rays,
\[
d_{\mathbb H}(t,z)=d_{\mathbb H}(t,x)-\varepsilon<d_{\mathbb H}(t,z_{\ell,m})
\]
for $t\in[q,q^+]$ and $(\ell,m)\ne(i,j)$.
For $\varepsilon<d_{\mathbb H}(q^+,x)-d_{\mathbb H}(q^+,v_i)$, we have
\[
d_{\mathbb H}(q,z)=d_{\mathbb H}(q,v_i)-\varepsilon,\,\, d_{\mathbb H}(q^+,z)>d_{\mathbb H}(q^+,v_i).
\]
Thus $z$ and $v_i$ are equidistant at some point of $[q,q^+]$ and strictly closer there than every other site, proving $z\sim_{\omega'}v_i$.

Choose $R>0$ with $C_x(\omega)\subset B(x,R)$. Since $d_{\mathbb H}(t,x)>d_{\mathbb H}(t,\omega)$ uniformly on $\partial B(x,R)$, for sufficiently small $\varepsilon$ no new Voronoi cell meets this sphere. By convexity, all new cells lie in $B(x,R)$, so the old edges that may change belong to a finite set independent of $\varepsilon$.

For each such edge $\{a,b\}$ not incident to $x$, choose $p$ in the relative interior of its facet. Then $d_{\mathbb H}(p,a)=d_{\mathbb H}(p,b)<d_{\mathbb H}(p,x)$, so the edge is preserved for sufficiently small $\varepsilon$. Thus we can choose $\varepsilon>0$ so that all the required adjacencies hold and every old edge not incident to $x$ is preserved. These properties are stable under sufficiently small perturbations of the inserted points, giving the desired nonempty open set $\mathcal U\subset B(x,r)^{3k}$.
\end{proof}

\subsection{Hyperbolic plane}\label{sec:plane}
We focus on the case $d=2$ and identify $\mathbb H^2$ with the Poincar\'e disk. We first recall the Dirichlet principle.

\begin{lemma}[Dirichlet principle]\label{lem:dirichlet}
Let $f\colon B\to\RR$ be bounded and Borel. Suppose that $u\colon\omega\to\RR$ is bounded, $\cE_\omega(u)<\infty$, and $u(X_n)\to f(\Xi_+)$ $\PP_x^\omega$-almost surely for every $x\in\omega$. Then
\[
\cE_\omega(H_\omega f)\le\cE_\omega(u).
\]
Further, equality holds if and only if $u=H_\omega f$.
\end{lemma}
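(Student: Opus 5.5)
The plan is to use the standard Hilbert-space argument on the space of finite-energy functions, together with the identification of $H_\omega f$ as the harmonic function with the prescribed boundary values. Write $h=H_\omega f$ and $v=u-h$. Both $u$ and $h$ are bounded, and $u(X_n)\to f(\Xi_+)$ and $h(X_n)\to f(\Xi_+)$ almost surely. Hence $v$ is bounded and $v(X_n)\to0$ $\PP_x^\omega$-almost surely for every $x$.

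The first and main step is to show that $\cE_\omega(h)<\infty$. Once this is known, $\cE_\omega(v)<\infty$ as well, and we expand
\[
\cE_\omega(u)=\cE_\omega(h)+2\cE_\omega(h,v)+\cE_\omega(v).
\]
The claim then reduces to the orthogonality $\cE_\omega(h,v)=0$. With orthogonality in hand, $\cE_\omega(u)\ge\cE_\omega(h)$, with equality iff $\cE_\omega(v)=0$. Since $\mathcal G_\omega$ is connected, $\cE_\omega(v)=0$ makes $v$ constant. That constant must be $0$ because $v(X_n)\to0$, so $u=H_\omega f$.

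For finiteness of $\cE_\omega(h)$, I would obtain it from the orthogonal (Royden) decomposition of the Dirichlet space $\mathbf D$ of finite-energy functions on the transient graph $\mathcal G_\omega$: $\mathbf D=\mathbf D_0\oplus\mathbf{HD}$. Here $\mathbf D_0$ is the energy closure of finitely supported functions, modulo constants. Decompose $u=u_0+u_h$ with $u_h$ harmonic of finite energy and $u_0\in\mathbf D_0$. By the known theory for transient networks (Ancona--Lyons--Peres), functions in $\mathbf D_0$ converge to $0$ along almost every random walk path. Hence $u_h(X_n)\to f(\Xi_+)$ as well.

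The second step is to check that $u_h$ is bounded, so that it can be compared with $h$. Boundedness follows from the Dirichlet-space maximum principle. Truncation by $\pm\|u\|_\infty$ does not increase energy and preserves the boundary limit. So the projection of the truncated $u$ is again harmonic with the same limits, and its finite energy is at most that of $u$.

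Identification of $u_h$ with $h$ then uses uniqueness of bounded harmonic functions with prescribed boundary limits. Let $g$ be bounded harmonic with $g(X_n)\to f(\Xi_+)$. Then $g(X_n)$ is a bounded martingale converging to $f(\Xi_+)$, so $g(x)=\EE_x^\omega f(\Xi_+)=h(x)$. Thus $u_h=h$, giving $\cE_\omega(h)\le\cE_\omega(u)$ directly and $u-h=u_0\in\mathbf D_0$. Orthogonality $\cE_\omega(h,u_0)=0$ holds for finitely supported $u_0$ by summation by parts and harmonicity of $h$, and it passes to the $\mathbf D_0$ closure by continuity of $\cE_\omega(h,\cdot)$. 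The equality case follows as above.

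The main obstacle is the step asserting that $\mathbf D_0$ functions have zero limits along random walk paths. This is a genuine theorem: it is the Ancona--Lyons--Peres result that finite-energy functions converge along the walk. It is combined with the fact that the limit vanishes for the $\mathbf D_0$ part, which is proved via the martingale $\mathbf D_0$-projection and the supermartingale bound $\EE[\sum (v(X_{n+1})-v(X_n))^2]\lesssim\cE(v)$. I would cite this rather than reprove it.
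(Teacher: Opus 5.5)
Your outline is correct, but it takes a much heavier route than the paper. The paper uses neither the Royden decomposition nor the Ancona--Lyons--Peres theorem.

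The paper's argument goes as follows. Take a finite exhaustion $F_n\uparrow\omega$ with exit times $\tau_n$. Let $u_n(x)=\EE_x^\omega[u(X_{\tau_n})]$ for $x\in F_n$, and $u_n=u$ off $F_n$. Since $u-u_n$ is finitely supported and $u_n$ is harmonic on $F_n$, one has exactly $\cE_\omega(u)=\cE_\omega(u_n)+\cE_\omega(u-u_n)$. The hypothesis $u(X_k)\to f(\Xi_+)$ and bounded convergence give $u_n\to H_\omega f$ pointwise. Fatou's lemma, applied edgewise to both terms, then yields $\cE_\omega(u)\ge\cE_\omega(H_\omega f)+\cE_\omega(u-H_\omega f)$.

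This one inequality gives finiteness of $\cE_\omega(H_\omega f)$, the comparison, and the equality case at once. The identification of the limit with $H_\omega f$ is immediate because the approximants are defined through the walk itself. In your approach, that identification is exactly where the deep input (vanishing of $\mathbf D_0$-functions along the walk) is spent. What your route buys in return is more structure than the lemma needs: $u-H_\omega f\in\mathbf D_0$, and the exact identity $\cE_\omega(u)=\cE_\omega(H_\omega f)+\cE_\omega(u-H_\omega f)$ rather than an inequality. Also, $\mathbf D_0$ should be the closure of finitely supported functions itself, not taken modulo constants. On a transient graph it contains no nonzero constants, and vanishing along the walk only makes sense for actual functions.

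One step needs repair: the boundedness of $u_h$. Truncating $u$ at $\pm\|u\|_\infty$ changes nothing, since $u$ is already bounded by that constant, so it says nothing about $\sup|u_h|$. A truncation argument would have to truncate $u_h$ and show that the result still lies in $u+\mathbf D_0$, which needs an extra lattice property of $\mathbf D_0$.

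It is easier to avoid boundedness altogether. Write $g_\omega(x,y)=\sum_{k\ge0}\PP_x^\omega(X_k=y)$. Reversibility gives $g_\omega(x,y)/\deg_\omega(y)=g_\omega(y,x)/\deg_\omega(x)\le g_\omega(x,x)/\deg_\omega(x)$. Hence $\EE_x^\omega\sum_{k\ge0}(u_h(X_{k+1})-u_h(X_k))^2\le 2g_\omega(x,x)\cE_\omega(u_h)/\deg_\omega(x)$. So $u_h(X_k)$ is an $L^2$-bounded, hence uniformly integrable, martingale. Its almost sure limit is $f(\Xi_+)$ by your Ancona--Lyons--Peres step, and therefore $u_h(x)=\EE_x^\omega[f(\Xi_+)]=H_\omega f(x)$ directly.
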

\begin{proof}
Let $(F_n)_{n\ge1}$ be an increasing exhaustion of $\omega$ by finite sets, with exit times $\tau_n=\inf\{j\ge0 \colon X_j\notin F_n\}$, which tend to infinity. Let
\[
u_n(x)=\begin{cases}
\EE_x^\omega[u(X_{\tau_n})],&x\in F_n,\\
u(x),&x\notin F_n.
\end{cases}
\]
The first-step decomposition shows that $u_n$ is harmonic on $F_n$, hence $\cE_\omega$-orthogonal to $u-u_n$, which is supported in $F_n$. Therefore,
\[
\cE_\omega(u)=\cE_\omega(u_n)+\cE_\omega(u-u_n).
\]
The convergence $u(X_n)\to f(\Xi_+)$ and bounded convergence give $u_n(x)\to H_\omega f(x)$ for every $x\in\omega$. Then, by Fatou's lemma, we have
\[
\cE_\omega(u)\ge\cE_\omega(H_\omega f)+\cE_\omega(u-H_\omega f).
\]
In the equality case, $u-H_\omega f$ is constant by connectivity, and since $(u-H_\omega f)(X_n)\to0$, the constant is zero.
\end{proof}

\begin{proposition}\label{prop:plane-energy}
For $\mu_o$-almost every $\omega$, we have
\[
\cE_\omega(H_\omega f)<\infty
\]
for every $f\in C^\infty(S^1)$.
\end{proposition}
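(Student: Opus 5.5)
We will use the Dirichlet principle, Lemma~\ref{lem:dirichlet}, with a geometric competitor. Given $f\in C^\infty(S^1)$, let $F$ be its Euclidean harmonic extension to the closed unit disk. Since $f$ is smooth, $F$ is smooth up to the boundary, so $|\nabla_{\mathrm{euc}}F|\le C_f$ on the closed disk. Define $u=F|_\omega$. Then $u$ is bounded. Since $X_n\to\Xi_+$ in the Euclidean topology of the closed disk and $F$ is continuous there, we get $u(X_n)\to f(\Xi_+)$ almost surely. Hence Lemma~\ref{lem:dirichlet} gives $\cE_\omega(H_\omega f)\le\cE_\omega(u)$, and it suffices to show $\cE_\omega(u)<\infty$ for every such $f$, simultaneously for $\mu_o$-almost every $\omega$.

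Bound each edge term by a line integral along the hyperbolic geodesic $[x,y]$:
\[
(u(x)-u(y))^2\le d_{\mathbb H}(x,y)\int_{[x,y]}|\nabla_{\mathbb H}F|^2\,\dd\ell_{\mathbb H}.
\]
Here $|\nabla_{\mathbb H}F|(z)=\tfrac{1-|z|^2}{2}|\nabla_{\mathrm{euc}}F|(z)\le C_f(1-|z|^2)$. We then take expectations and reduce via a mass-transport or Campbell-type computation, as in Proposition~\ref{prop:local-finiteness}, to an integral against volume. Consider the diagonally invariant measure on $X\times X$ given by $\EE_\mu\sum_{x\sim y}d_{\mathbb H}(x,y)\,\delta_x\otimes\ell_{\mathbb H}|_{[x,y]}$. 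Its first marginal is at most $\lambda\ell_2\vol$, by Lemma~\ref{lem:moments} with $j=2$. By Theorem~\ref{thm:mass-transport}, its second marginal is also at most $\lambda\ell_2\vol$. We have some freedom in organizing this step (ordered versus unordered edges only changes a factor of $2$). Consequently
\[
\EE_\mu\cE_\omega(u)\le\lambda\ell_2\int_X|\nabla_{\mathbb H}F|^2\,\dd\vol
=\lambda\ell_2\int_{\mathbb D}|\nabla_{\mathrm{euc}}F|^2\,\dd x\,\dd y<\infty.
\]
This uses the conformal invariance of Dirichlet energy in dimension two, which is the key place where $d=2$ enters. Thus $\cE_\omega(u)<\infty$ for $\mu$-almost every $\omega$, and by Campbell's identity also for $\mu_o$-almost every $\omega$ (the event is $G$-invariant up to choosing the base point).

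The main obstacle is making the exceptional null set independent of $f$. There are uncountably many $f$, and a single countable dense family does not immediately control all $f$. The first remedy to try is a uniform bound. Define the weighted edge functional
\[
S(\omega)=\sum_{\{x,y\}\in E_\omega}d_{\mathbb H}(x,y)\int_{[x,y]}(1-|z|^2)^2\,\dd\ell_{\mathbb H}(z).
\]
The above computation shows $\EE_\mu S\le\lambda\ell_2\int_X(1-|z|^2)^2\,\dd\vol<\infty$, since this integral is a constant times the Euclidean area of the disk. Hence $S(\omega)<\infty$ almost surely, on one null set not depending on $f$. For each $f$ we then get $\cE_\omega(u)\le C_f^2S(\omega)$, with $C_f=\sup|\nabla_{\mathrm{euc}}F|<\infty$. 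The remaining routine checks are Borel measurability of $S$ and the validity of the mass transport for the diagonally invariant measure; the latter is $\sigma$-finite by the same marginal bound.
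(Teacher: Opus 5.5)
Your proof is correct and follows essentially the same route as the paper: the Dirichlet principle with a smooth extension to the closed disk, a Cauchy--Schwarz bound along each geodesic edge, and mass transport for the measure $\EE_\mu\sum_{x}\sum_{y\sim_\omega x} d_{\mathbb H}(x,y)\,\delta_x\otimes\ell_{\mathbb H}|_{[x,y]}$, with the two-dimensional input being that $\int_X(1-|z|^2)^2\,\dd\vol<\infty$, i.e.\ conformal invariance of the energy. Your $f$-independent functional $S(\omega)$ plays exactly the role of the paper's quantity $\sum_{\{x,y\}\in E_\omega}|x-y|^2$. The paper controls that quantity by running the same estimate on the coordinate functions $F_1,F_2$, and the sum of their squared hyperbolic gradients is a constant multiple of your weight $(1-|z|^2)^2$.
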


\begin{proof}
It suffices to prove that, for $\mu_o$-almost every $\omega$,
\[
\sum_{\{x,y\}\in E_\omega}|x-y|^2<\infty,
\]
where $|x-y|$ is Euclidean distance. Indeed, every $f\in C^\infty(S^1)$ admits a $C^\infty$ extension $F$ to the closed disk with finite energy in the usual sense, and the Dirichlet principle gives
\[
\cE_\omega(H_\omega f)\le\cE_\omega(F|_\omega)
\le\|\nabla_{\mathrm{Euc}}F\|_\infty^2\sum_{\{x,y\}\in E_\omega}|x-y|^2<\infty
\]
on this event.

Consider the measure
\[
\frac12\EE_\mu\sum_{x\in\omega}\sum_{y\sim_\omega x}d_{\mathbb{H}}(x,y)\,\delta_x\otimes\bigl(\ell_{\mathbb H}|_{[x,y]}\bigr)
\]
on $\mathbb H^2\times\mathbb H^2$, which is invariant under the diagonal $G$-action. Its first marginal is $\frac{\lambda\ell_2}{2}\vol$, so it is $\sigma$-finite by Lemma~\ref{lem:moments}. By applying the mass-transport principle to $|\nabla_{\mathbb H}F_i|^2$ for the coordinate functions $F_i(z_1,z_2)=z_i$, $i=1,2$, we obtain
\begin{align*}
\EE_\mu\sum_{\{x,y\}\in E_\omega}|x-y|^2
&=\sum_{i=1}^2\EE_\mu\cE_\omega(F_i|_\omega)\\
&\le\frac{\lambda\ell_2}{2}\sum_{i=1}^2\int_{\mathbb H^2}|\nabla_{\mathbb H}F_i|^2\,\dd\vol\\
&=\frac{\lambda\ell_2}{2}\sum_{i=1}^2\int_{\mathbb D}|\nabla_{\mathrm{Euc}}F_i|^2\,\dd z
=\pi\lambda\ell_2<\infty.
\end{align*}
Note that this is where we use $d=2$. Since the event that this sum is finite is $G$-invariant, Campbell's identity shows that it is also $\mu_o$-conull.
\end{proof}

\begin{theorem}[Silverstein]\label{thm:doob-naim}
Let $h$ be a finite-energy harmonic function on $\mathcal G_\omega$. Then the limits $h_\pm=\lim_{n\to\infty}h(X_{\pm n})$ exist for $I_\omega$-almost every trajectory, and
\[
\cE_\omega(h)=\frac12\int_{W_\omega^*}(h_--h_+)^2\,\dd I_\omega(w^*).
\]
\end{theorem}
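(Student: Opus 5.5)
We plan to deduce the formula from Silverstein's identification of the pushed-forward current on the Martin boundary, combined with the classical Doob--Na\"im formula for finite-energy harmonic functions of a reversible transient chain. Fix $a\in\omega$ and let $\widetilde B_\omega$, $\widetilde\nu_{\omega,x}$, $\widetilde\Xi_\pm$, $\Theta_\omega$ be as in Subsection~\ref{subsec:naim}. The network is $\mathcal G_\omega$ with unit conductances, so the symmetrizing measure is $\alpha(x)=\deg_\omega(x)$. The energy $\cE_\omega$ is then exactly the Dirichlet form of the chain, with each unoriented edge counted once.

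The first step is existence of the limits. By Doob's theory for finite-energy harmonic functions on transient reversible chains, as in \cite{Sil74}, $h$ has a fine boundary function $\widetilde h$ on $\widetilde B_\omega$. This function satisfies $h(X_n)\to\widetilde h(\widetilde\Xi_+)$ $\PP_x^\omega$-almost surely for every $x$. Applying this to the future and the past of the bilateral walk, which are independent one-sided walks, gives the limits $h_\pm=\widetilde h(\widetilde\Xi_\pm)$ $\PP_x^\omega$-almost surely for each $x$. Hence they exist $P_\omega$-almost everywhere. The limits are $\theta$-invariant, so they descend to $W_\omega^*$ and exist $I_\omega$-almost everywhere.

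The second step is the Doob--Na\"im formula itself:
\[
\cE_\omega(h)=\frac{\deg_\omega(a)}2\int_{\widetilde B_\omega^2}(\widetilde h(\zeta)-\widetilde h(\zeta'))^2\,\Theta_\omega(\zeta,\zeta')\,\dd\widetilde\nu_{\omega,a}(\zeta)\,\dd\widetilde\nu_{\omega,a}(\zeta').
\]
This is Silverstein's version for symmetric Markov chains, with the normalization fixed by $\alpha=\deg_\omega$. We then substitute Silverstein's identity $(\widetilde\Xi_-,\widetilde\Xi_+)_*I_\omega=\deg_\omega(a)\Theta_\omega\widetilde\nu_{\omega,a}^{\otimes2}$ recorded above. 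The right-hand side becomes $\frac12\int(h_--h_+)^2\,\dd I_\omega$ by the change of variables formula.

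The main obstacle is bookkeeping of constants. One must check that Silverstein's Dirichlet form, $\frac12\sum_{x,y}\alpha(x)p(x,y)(u(x)-u(y))^2$, matches $\cE_\omega$. This holds because $\alpha(x)p(x,y)=1$ on edges and each edge appears twice in the ordered sum. One must also check that the factor $\frac12$ in the Na\"im formula is consistent with the same normalization.

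As an independent sanity check, which could also serve as a self-contained proof, we would use the finite currents. By Lemma~\ref{lem:current-construction}, $\frac12\int(h_--h_+)^2\,\dd J_{\omega,F}$ equals $\frac12\sum_{x\in F}\int\int(h(\xi)-h(\eta))^2\,\dd\rho_{\omega,F,x}(\xi)\,\dd\nu_{\omega,x}(\eta)$. One can identify this with the energy of the trace of $h$ on $F$, computed through escape probabilities. The trace energy increases to $\cE_\omega(h)$ along an exhaustion, and monotone convergence then gives the formula.
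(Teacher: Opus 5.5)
Your main line is essentially the paper's. The paper gives no argument of its own beyond citing \cite[Section~3]{Sil74}, and your reduction is a faithful unpacking of that citation: limits along the two independent halves of the bilateral walk, Silverstein's Martin-boundary Doob--Na\"im formula normalized by $\alpha=\deg_\omega$, and the change of variables through $(\widetilde\Xi_-,\widetilde\Xi_+)_*I_\omega=\deg_\omega(a)\Theta_\omega\widetilde\nu_{\omega,a}^{\otimes2}$.

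The closing ``self-contained'' alternative, however, has a genuine gap. A smaller point first: $h_\pm$ are tail variables of the path and are not known to be functions of $\Xi_\pm\in B$. So Lemma~\ref{lem:current-construction} must be replaced by its path-space or Martin version.

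The real issue is the claim that the finite-current integral is a trace energy increasing to $\cE_\omega(h)$. Use time-reversal symmetry, optional stopping at $\tau_F^+$, and $\cE_{\omega,F}(h)=\sum_{s\in F}\deg_\omega(s)h(s)\EE_s^\omega[h_+\one_{\{\tau_F^+=\infty\}}]$. One then computes
\[
\frac12\int_{W_{\omega,F}^*}(h_--h_+)^2\,\dd I_\omega
=\frac12\sum_{s,t\in F}\deg_\omega(s)p_{\omega,F}(s,t)\bigl(h(s)-h(t)\bigr)^2
+\sum_{s\in F}\deg_\omega(s)\EE_s^\omega\bigl[(h_+-h(s))^2\one_{\{\tau_F^+=\infty\}}\bigr].
\]
This is \emph{not} the trace form $\cE_{\omega,F}(h)$ built from escape probabilities. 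For $h\equiv1$ the left side vanishes. By contrast, $\cE_{\omega,F}(1)=\sum_{s\in F}\rho_{\omega,F,s}(B)$ is a positive capacity, nondecreasing in $F$, so it cannot converge to $\cE_\omega(1)=0$.

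The right-hand side above is indeed nondecreasing in $F$. Since $\deg_\omega(s)p_{\omega,F}(s,t)\ge\one_{\{s\sim_\omega t\}}$, it dominates the energy of the edges inside $F$. Letting $F\uparrow\omega$ therefore gives only $\frac12\int(h_--h_+)^2\,\dd I_\omega\ge\cE_\omega(h)$.

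The reverse inequality is the nontrivial half of the Doob--Na\"im formula. It says that the extra conductances created by excursions outside $F$, together with the escape terms, do not overshoot $\cE_\omega(h)$ in the limit. Your sketch asserts this rather than proving it. So the alternative is circular at its key step, and the statement genuinely rests on the citation, as in the paper.
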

\begin{proof}
We refer to \cite[Section~3]{Sil74} for the proof.
\end{proof}

Recall $\widetilde J_\omega=(\widetilde\Xi_-,\widetilde\Xi_+)_*I_\omega$ from Subsection~\ref{subsec:naim}. By Proposition~\ref{prop:plane-energy} and Theorem~\ref{thm:doob-naim}, we have
\begin{align*}
\cE_\omega(H_\omega f)
&=\frac12\int_{\widetilde B_\omega^2}
\bigl(f(\pi_\omega(\zeta))-f(\pi_\omega(\zeta'))\bigr)^2
\,\dd\widetilde J_\omega(\zeta,\zeta')\\
&=\frac12\int_{B^2}(f(\xi)-f(\eta))^2
\,\dd(\pi_\omega\times\pi_\omega)_*\widetilde J_\omega(\xi,\eta)\\
&=\frac12\int_{B^{(2)}}(f(\xi)-f(\eta))^2\,\dd J_\omega(\xi,\eta),
\end{align*}
where the last equality follows from Subsection~\ref{subsec:naim}.

\begin{proof}[Proof of Theorem~\ref{thm:main} for $d=2$]
Suppose that $\nu_{\omega,o}\ll m$ for $\mu_o$-almost every $\omega$. We restrict to a $\cR$-invariant conull set on which Lemma~\ref{lem:finite-rigidity}, Proposition~\ref{prop:plane-energy}, and the Doob--Na\"im formula hold. By Lemma~\ref{lem:insertion-conull}, outside a further $\cR$-invariant null set, almost every finite insertion remains in this set. Therefore, for such $\omega$, the nonempty open set of insertion tuples in Lemma~\ref{lem:geometric-insertion} contains an insertion $\omega'=\omega\cup Z$ in the same conull set, for which Lemma~\ref{lem:finite-rigidity} gives $J_{\omega'}=J_\omega$.

For $f\in C^\infty(S^1)$, Proposition~\ref{prop:plane-energy} gives $\cE_\omega(H_\omega f)<\infty$ and $\cE_{\omega'}(H_{\omega'}f)<\infty$. Since only finitely many edges change, we also have $\cE_\omega((H_{\omega'}f)|_\omega)<\infty$. By the martingale convergence theorem and the finite-perturbation argument in Subsection~\ref{subsec:insertion}, we have $H_{\omega'}f(X_n)\to f(\Xi_+)$ $\PP_x^\omega$-almost surely for every $x\in\omega$. By equality of the currents, the Doob--Na\"im formula, and the Dirichlet principle,
\begin{align*}
\cE_\omega(H_\omega f)
&=\cE_{\omega'}(H_{\omega'}f)
=\cE_\omega((H_{\omega'}f)|_\omega)+\delta\cE_{\omega,\omega'}(H_{\omega'}f)\\
&\ge\cE_\omega((H_{\omega'}f)|_\omega)\ge\cE_\omega(H_\omega f).
\end{align*}
Thus equality holds throughout. The Dirichlet principle gives $(H_{\omega'}f)|_\omega=H_\omega f$, and Lemma~\ref{lem:geometric-insertion} gives $H_\omega f(o)=H_\omega f(y)$ for every $y\sim_\omega o$.

Taking a countable dense family in $C(S^1)$ consisting of smooth functions gives $\nu_{\omega,o}=\nu_{\omega,y}$ for every $y\sim_\omega o$, which, by $\cR$-invariance, further implies that $\nu_{\omega,x}$ is independent of $x\in\omega$. By Campbell's identity, the same holds for $\mu$-almost every $\omega$. Writing this common measure as $\eta_\omega$ gives a measurable $G$-equivariant family. Hence $\int\eta_\omega\,\dd\mu(\omega)$ is a $G$-invariant probability measure on $B$, which is a contradiction.
\end{proof}

\subsection{General case}\label{sec:higher}
In higher dimensions, harmonic extensions of boundary functions need not have finite Dirichlet energy, so the preceding argument using the Doob--Na\"im formula cannot be applied directly. To overcome this difficulty, we use the trace forms on finite sets and their relation to Na\"im currents. See also \cite{CFY06} for background and general treatment.

Let $F\subset\omega$ be finite and nonempty. The walk observed at successive visits to $F$ is defined by the (substochastic) transition kernel
\[
p_{\omega,F}(s,t)=\PP_s^\omega(\tau_F^+<\infty,\ X_{\tau_F^+}=t), 
\]
for $s,t\in F$, where $\tau_F^+$ is defined in Subsection~\ref{subsec:naim}. The associated form is defined by
\[
\cE_{\omega,F}(a,b)=\frac12\sum_{s,t\in F}\deg_\omega(s)p_{\omega,F}(s,t)(a(s)-a(t))(b(s)-b(t))
+\sum_{s\in F}\rho_{\omega,F,s}(B)a(s)b(s),
\]
for $a,b\colon F\to\RR$, where $\rho_{\omega,F,s}$ is defined before Lemma~\ref{lem:current-construction}. We write $\cE_{\omega,F}(a)=\cE_{\omega,F}(a,a)$ and also use the same notation for functions on $\omega$ by restriction to $F$.

This kernel is reversible, and the first-step decomposition gives
\[
\deg_\omega(s)p_{\omega,F}(s,t)=\one_{\{s\sim_\omega t\}}+
\sum_{\substack{y\sim_\omega s \\ y\notin F}}\PP_y^\omega(\tau_F<\infty,\ X_{\tau_F}=t)
\]
and
\[
\rho_{\omega,F,s}(B)=\deg_\omega(s)\left(1-\sum_{t\in F}p_{\omega,F}(s,t)\right),
\]
and hence
\[
\cE_{\omega,F}(a,b)=\sum_{s\in F}\deg_\omega(s)b(s)
\left(a(s)-\sum_{t\in F}p_{\omega,F}(s,t)a(t)\right).
\]
The form $\cE_{\omega,F}$ is positive definite: the trace kernel is irreducible, and transience gives a positive killing probability at some vertex of $F$.

Let $f\colon B\to\RR$ be bounded and Borel. By the strong Markov property, we have
\[
H_\omega f(s)=\sum_{t\in F}p_{\omega,F}(s,t)H_\omega f(t)
+\frac{\rho_{\omega,F,s}(f)}{\deg_\omega(s)}
\]
for $s\in F$. By combining this with Lemma~\ref{lem:current-construction}, we obtain
\[
\cE_{\omega,F}(H_\omega f,v)=\sum_{s\in F}\rho_{\omega,F,s}(f)v(s),\,\,
\cE_{\omega,F}(H_\omega f)=\sum_{s\in F}\rho_{\omega,F,s}(f)\nu_{\omega,s}(f)=J_{\omega,F}(f\otimes f),
\]
for every $v\colon F\to\RR$. These identities involve only finite quantities for bounded $f$, even if the global energy $\cE_\omega(H_\omega f)$ is infinite.

For $a\colon F\to\RR$ and $y\in\omega$, let
\[
h_a(y)=\EE_y^\omega\!\left[a(X_{\tau_F})\one_{\{\tau_F<\infty\}}\right].
\]
Then $h_a$ has finite energy and
\[
\cE_\omega(h_a)=\cE_{\omega,F}(a).
\]
Indeed, for a finite exhaustion $(F_n)_{n\ge1}$ of $\omega$ containing $F$, let $\sigma_n$ be the exit time from $F_n$ and put
$h_n(y)=\EE_y^\omega[a(X_{\tau_F})\one_{\{\tau_F<\sigma_n\}}]$.
The functions $h_n$ are finitely supported, harmonic on $F_n\setminus F$, and converge pointwise to $h_a$. For $m\ge n$, harmonicity gives
$\cE_\omega(h_n)=\cE_\omega(h_m)+\cE_\omega(h_n-h_m)$.
Since $\cE_\omega(h_n)$ is decreasing, Fatou's lemma gives
\[
\cE_\omega(h_n-h_a)
\le\liminf_{m\to\infty}\cE_\omega(h_n-h_m)
=\cE_\omega(h_n)-\lim_{m\to\infty}\cE_\omega(h_m)\to0,
\]
and hence $h_n\to h_a$ in energy. Since $h_n=a$ on $F$ and $h_n$ is harmonic on $F_n\setminus F$, it holds that
\[
\cE_\omega(h_n)=\sum_{s\in F}a(s)\sum_{y\sim_\omega s}(a(s)-h_n(y)),
\]
and further we can pass to the limit and obtain
\begin{align*}
\cE_\omega(h_a)
&=\lim_{n\to\infty}\cE_\omega(h_n)
=\sum_{s\in F}a(s)\sum_{y\sim_\omega s}(a(s)-h_a(y))\\
&=\sum_{s\in F}\deg_\omega(s)a(s)
\left(a(s)-\sum_{t\in F}p_{\omega,F}(s,t)a(t)\right)=\cE_{\omega,F}(a).
\end{align*}

Based on these preparations, we can use the Na\"im current to compare energies under finite insertions.

\begin{lemma}\label{lem:finite-comparison}
Let $\omega,\omega'\in M(X)$ satisfy $\omega\subseteq\omega'$, $\#(\omega'\setminus\omega)<\infty$, and $\#(E_\omega\mathbin\triangle E_{\omega'})<\infty$. Suppose that $J_\omega=J_{\omega'}$ and $J_\omega$ is Radon on $B^{(2)}$. Then, for every bounded Borel function $f\colon B\to\RR$, the function $u=(H_{\omega'}f)|_\omega-H_\omega f$ has finite energy and satisfies
\[
\cE_\omega(u)+\delta\cE_{\omega,\omega'}(H_{\omega'}f)=0.
\]
In particular, if $\delta\cE_{\omega,\omega'}(v)\ge0$ for every $v\colon\omega'\to\RR$, then
\[
(H_{\omega'}f)|_\omega=H_\omega f,\,\, \delta\cE_{\omega,\omega'}(H_{\omega'}f)=0
\]
for every bounded Borel function $f\colon B\to\RR$.
\end{lemma}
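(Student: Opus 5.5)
The plan is to work entirely with the trace forms $\cE_{\omega,F}$ on large finite sets, where every quantity is finite. Then $\cE_\omega(u)+\delta\cE_{\omega,\omega'}(H_{\omega'}f)$ is recovered as a limit along an exhaustion. Fix a finite set $S\subset\omega'$ containing $Z=\omega'\setminus\omega$ and every endpoint of a changed edge, as in Subsection~\ref{subsec:insertion}. Choose an increasing exhaustion $(F_n)$ of $\omega$ by finite sets with $F_n\supset S\cap\omega$, and put $F_n'=F_n\cup Z\subset\omega'$. The first step is to compare the finite currents.

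\emph{Step 1: the finite currents agree.} A bi-infinite trajectory avoiding $F_n'$ stays in the common exterior $\omega\setminus S=\omega'\setminus S$. On that exterior the transition probabilities coincide, so the restrictions of $I_\omega$ and $I_{\omega'}$ to trajectories avoiding $F_n$, respectively $F_n'$, coincide. Hence $J_\omega-J_{\omega,F_n}=J_{\omega'}-J_{\omega',F_n'}$ as measures. On compact subsets of $B^{(2)}$ all terms are finite, because $J_\omega$ is Radon there. Using $J_\omega=J_{\omega'}$, subtraction gives $J_{\omega,F_n}=J_{\omega',F_n'}$ on $B^{(2)}$. The diagonal requires separate care, since $J_\omega=J_{\omega'}$ is only assumed on $B^{(2)}$. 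To handle it, I would compare the diagonal masses of the finite currents directly via the same exterior-path identity. Alternatively, I would only pair the currents against $(f(\xi)-f(\eta))^2$ and the off-diagonal part. Via the identity $\cE_{\omega,F}(H_\omega f)=J_{\omega,F}(f\otimes f)$, Step 1 then yields $\cE_{\omega,F_n}(H_\omega f)=\cE_{\omega',F_n'}(H_{\omega'}f)$, possibly up to a term that vanishes in the limit.

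\emph{Step 2: energy comparison for fixed $n$.} Let $k$ be the function $h_a$ on $\mathcal G_{\omega'}$ with boundary data $a=H_{\omega'}f|_{F_n'}$. Then $\cE_{\omega'}(k)=\cE_{\omega',F_n'}(H_{\omega'}f)$. Since $k=H_{\omega'}f$ on $F_n'\supset S$, we have $\cE_\omega(k|_\omega)=\cE_{\omega'}(k)-\delta\cE_{\omega,\omega'}(H_{\omega'}f)$. The trace form satisfies a Dirichlet principle: $\cE_{\omega,F}(a)$ is the minimal energy among finite-energy functions equal to $a$ on $F$ and obtained as energy limits of finitely supported modifications. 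Applying it to $k|_\omega$ gives
\[
\cE_{\omega,F_n}(H_{\omega'}f)\le\cE_{\omega',F_n'}(H_{\omega'}f)-\delta\cE_{\omega,\omega'}(H_{\omega'}f).
\]
Now expand $\cE_{\omega,F_n}(u)$ bilinearly. Use the polarization identity $\cE_{\omega,F}(H_\omega f,v)=\sum_{s\in F}\rho_{\omega,F,s}(f)v(s)$ together with Step 1. This gives
\[
\cE_{\omega,F_n}(u)+\delta\cE_{\omega,\omega'}(H_{\omega'}f)\le-2\sum_{s\in F_n}\rho_{\omega,F_n,s}(f)\,u(s)+o(1).
\]

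\emph{Step 3: limits and the reverse inequality.} The trace forms $\cE_{\omega,F_n}(u)$ increase to $\cE_\omega(u)$, as for traces of Dirichlet forms. So it remains to show that the cross term $\sum_s\rho_{\omega,F_n,s}(f)u(s)$ tends to zero. This is the step I expect to be the main obstacle. The measure $\rho_{\omega,F_n,s}$ is carried by walks escaping $F_n$ from $s$, and $u(X_m)\to0$ almost surely by the finite-perturbation argument. So I would try to rewrite the cross term as an expectation, under $I_\omega|_{W_{\omega,F_n}^*}$, of $f(\Xi_-)u(X_{\text{last exit}})$. Dominated convergence should then apply, using finiteness of $J_{\omega,F_n}$ and the Radon bound. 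Granting this, we obtain $\cE_\omega(u)<\infty$ and $\cE_\omega(u)+\delta\cE\le0$. For the reverse inequality, run the same argument with the roles of $\omega$ and $\omega'$ swapped. Extend $H_\omega f$ to $\omega'$ by the harmonic extension off $S$, and compare energies in $\omega'$; this gives $\ge0$. For the final claim, if $\delta\cE\ge0$ then $\cE_\omega(u)=0$. Connectivity makes $u$ constant, and $u(X_m)\to0$ forces $u=0$, whence $\delta\cE_{\omega,\omega'}(H_{\omega'}f)=0$.
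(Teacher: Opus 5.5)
Your framework (trace forms on finite sets, the identity $\cE_{\omega,F}(H_\omega f)=J_{\omega,F}(f\otimes f)$, and the bilinear expansion of $\cE_{\omega,F_n}(u)$) is the one the paper uses. There is a genuine gap at the step you flag as the main obstacle, and the dominated-convergence plan there cannot work.

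Because $F_n'\supset S$ contains every endpoint of a changed edge, the first-step decomposition gives $\rho_{\omega',F_n',s}=\rho_{\omega,F_n,s}$ for $s\in F_n$, and $\rho_{\omega',F_n',z}=0$ for $z\in Z$. This has two consequences you did not use.

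\emph{The diagonal.} The two finite currents have the same first marginal. Their difference is carried by the diagonal by Step~1, so it vanishes, and $J_{\omega,F_n}=J_{\omega',F_n'}$ on all of $B^2$.

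\emph{The cross term.} Applying the decomposition of Lemma~\ref{lem:current-construction} in both graphs gives, for every $n$ and with no hypothesis on the currents,
\[
\sum_{s\in F_n}\rho_{\omega,F_n,s}(f)\,u(s)=J_{\omega',F_n'}(f\otimes f)-J_{\omega,F_n}(f\otimes f).
\]
So the cross term is exactly zero for each $n$ by Step~1, and this is where $J_\omega=J_{\omega'}$ must enter.

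An argument using only $u(X_m)\to0$ and the Radon bound never uses this hypothesis, so it would prove the conclusion for every finite insertion. For the insertions of Lemma~\ref{lem:geometric-insertion}, that would force $(H_{\omega'}f)|_\omega=H_\omega f$ unconditionally. The last step of the proof of Theorem~\ref{thm:main} would then give a contradiction without assuming absolute continuity. Concretely, walks in the common exterior agree in both graphs until they hit $S\cap\omega$, so $u(y)=\EE_y^\omega[u(X_{\tau_{S\cap\omega}})\one_{\{\tau_{S\cap\omega}<\infty\}}]$. A last-exit decomposition then shows that the cross term does not depend on $n$, so there is nothing to converge.

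The reverse inequality is also not established. Swapping the roles of $\omega$ and $\omega'$ gives only $\cE_{\omega'}(\tilde h-H_{\omega'}f)\le\delta\cE_{\omega,\omega'}(\tilde h)$ for an extension $\tilde h$ of $H_\omega f$ to $\omega'$. That is another upper bound on $\cE_\omega(u)$, not a lower bound on $\cE_\omega(u)+\delta\cE_{\omega,\omega'}(H_{\omega'}f)$.

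What you are missing is that your Step~2 inequality is an equality. Walks in the common exterior cannot enter $Z$ without crossing a changed edge. Hence the harmonic extension of $H_{\omega'}f|_{F_n}$ off $F_n$ in $\mathcal G_\omega$ is exactly $k|_\omega$. Equivalently, $\cE_{\omega',F_n'}(v)-\cE_{\omega,F_n}(v|_{F_n})=\delta\cE_{\omega,\omega'}(v)$ for every $v$.

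With the vanishing cross term, this gives $\cE_{\omega,F}(u)+\delta\cE_{\omega,\omega'}(H_{\omega'}f)=0$ already for the single set $F=S\cap\omega$. The representation of $u$ above and $\cE_\omega(h_a)=\cE_{\omega,F}(a)$ then give $\cE_\omega(u)=\cE_{\omega,S\cap\omega}(u)<\infty$. This is the paper's proof: one fixed finite set, no exhaustion, no limits. For the ``in particular'' statement alone, your one-sided inequality would suffice, but only once the cross term is known to vanish.
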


\begin{proof}
Choose a finite set $S\subset\omega$ containing every vertex of $\omega$ incident to a changed edge, and put $S'=S\cup(\omega'\setminus\omega)$, so that $\omega\setminus S=\omega'\setminus S'$. Trajectories staying in this common exterior contribute equally to $J_\omega$ and $J_{\omega'}$. Since $J_\omega=J_{\omega'}$ is Radon on $B^{(2)}$, we can subtract this common contribution on compact subsets of $B^{(2)}$ and obtain
\[
J_{\omega,S}|_{B^{(2)}}=J_{\omega',S'}|_{B^{(2)}}.
\]
Moreover, for $s\in S$, the first-step decomposition gives
\[
\rho_{\omega,S,s}
=\sum_{\substack{y\sim_\omega s\\y\notin S}}
\PP_y^\omega(\Xi_+\in\cdot,\tau_S=\infty)
=\rho_{\omega',S',s},
\]
while $\rho_{\omega',S',z}=0$ for $z\in S'\setminus S$. By Lemma~\ref{lem:current-construction}, the finite currents therefore have the same first marginal. Their difference is supported on the diagonal and has zero first marginal, so
\[
J_{\omega,S}=J_{\omega',S'}
\]
on $B^2$.
Let $h=H_\omega f$, $h'=H_{\omega'}f$, and $u=h'|_\omega-h$. Then, by the identities for the trace form above, we have $\cE_{\omega',S'}(h')=\cE_{\omega,S}(h)$.
Moreover, the identities above give
\[
\sum_{s\in S}\rho_{\omega,S,s}(f)h'(s)
=J_{\omega',S'}(f\otimes f)=J_{\omega,S}(f\otimes f)
=\sum_{s\in S}\rho_{\omega,S,s}(f)h(s).
\]
Hence we have
\[
\cE_{\omega,S}(h,u)=\sum_{s\in S}\rho_{\omega,S,s}(f)u(s)=0.
\]
For $a\colon S\to\RR$, the first-step decomposition gives
\begin{align*}
\cE_{\omega,S}(a)
&=\sum_{\substack{\{s,t\}\in E_\omega\\s,t\in S}}(a(s)-a(t))^2+\frac12\sum_{s,t\in S}\sum_{\substack{y\sim_\omega s\\y\notin S}}
\PP_y^\omega(\tau_S<\infty,\ X_{\tau_S}=t)(a(s)-a(t))^2\\
&\,\,+\sum_{s\in S}\rho_{\omega,S,s}(B)a(s)^2.
\end{align*}
By the choice of $S$, the second and third terms are the same in the corresponding expression for $\cE_{\omega',S'}$, and hence we have
\[
\cE_{\omega',S'}(v)-\cE_{\omega,S}(v|_S)=\delta\cE_{\omega,\omega'}(v)
\]
for $v\colon S'\to\RR$.
Applying this with $v=h'|_{S'}$ and using the preceding identities, we obtain
\begin{align*}
0&=\cE_{\omega',S'}(h')-\cE_{\omega,S}(h)
=\cE_{\omega,S}(h+u)-\cE_{\omega,S}(h)+\delta\cE_{\omega,\omega'}(h')\\
&=\cE_{\omega,S}(u)+2\cE_{\omega,S}(h,u)+\delta\cE_{\omega,\omega'}(h')
=\cE_{\omega,S}(u)+\delta\cE_{\omega,\omega'}(h').
\end{align*}
For $y\in\omega\setminus S$, the strong Markov property gives
\begin{align*}
h(y)
&=\EE_y^\omega\!\left[h(X_{\tau_S})\one_{\{\tau_S<\infty\}}\right]
+\EE_y^\omega\!\left[f(\Xi_+)\one_{\{\tau_S=\infty\}}\right],\\
h'(y)
&=\EE_y^\omega\!\left[h'(X_{\tau_S})\one_{\{\tau_S<\infty\}}\right]
+\EE_y^\omega\!\left[f(\Xi_+)\one_{\{\tau_S=\infty\}}\right].
\end{align*}
From these equalities we have
\[
u(y)=\EE_y^\omega\!\left[u(X_{\tau_S})\one_{\{\tau_S<\infty\}}\right]
\]
for every $y\in\omega$.
Applying the identity for $h_a$ with $F=S$ and $a=u|_S$ gives $\cE_\omega(u)=\cE_{\omega,S}(u)<\infty$, proving the asserted identity. If $\delta\cE_{\omega,\omega'}\ge0$, positive definiteness implies $u|_S=0$ and $\delta\cE_{\omega,\omega'}(h')=0$. The equality for $u(y)$ then gives $u=0$.
\end{proof}

\begin{proof}[Proof of Theorem~\ref{thm:main}]
By Lemma~\ref{lem:pure-type}, it suffices to derive a contradiction from the assumption that $\nu_{\omega,o}\ll m$ for $\mu_o$-almost every $\omega$. Fix a $K$-invariant, $\cR$-invariant $\mu_o$-conull set $A$ on which Lemma~\ref{lem:finite-rigidity} holds and the currents are Radon, as provided by Proposition~\ref{prop:local-finiteness}. Further, let $A_0\subset A$ be the $\cR$-invariant conull set given by Lemma~\ref{lem:insertion-conull}. For every $\omega\in A_0$, Lemma~\ref{lem:insertion-conull} and Lemma~\ref{lem:geometric-insertion} give a finite insertion $\omega' = \omega \cup Z \in A$ satisfying the conclusion of Lemma~\ref{lem:geometric-insertion}. Lemma~\ref{lem:finite-rigidity} gives $J_\omega=J_{\omega'}$, and Lemma~\ref{lem:finite-comparison} therefore yields
\[
(H_{\omega'}f)|_\omega=H_\omega f,\,\, \delta\cE_{\omega,\omega'}(H_{\omega'}f)=0
\]
for every bounded Borel function $f\colon B\to\RR$. The equality condition in Lemma~\ref{lem:geometric-insertion} gives $H_\omega f(o)=H_\omega f(y)$ for every $y\sim_\omega o$. Hence $\nu_{\omega,o}=\nu_{\omega,y}$ for every neighbor $y$ of $o$. By $\cR$-invariance and connectivity, $\nu_{\omega,x}$ is independent of $x\in\omega$. By Campbell's identity, averaging the resulting $G$-equivariant family of harmonic measures with respect to $\mu$ gives a $G$-invariant probability measure on $B$, which is impossible as in the planar case.
\end{proof}

\bibliographystyle{amsalpha}
\bibliography{main}
\end{document}